\theoremstyle{plain}
\newtheorem{thm}{Theorem}[section]
\newtheorem{lemma}[thm]{Lemma}
\newtheorem{prop}[thm]{Proposition}
\newtheorem{cor}[thm]{Corollary}
\newtheorem*{thm*}{Theorem}
\newtheorem*{lemma*}{Lemma}
\newtheorem*{prop*}{Proposition}
\newtheorem*{cor*}{Corollary}
\newtheorem*{conj*}{Conjecture}
\theoremstyle{definition}
\newtheorem{defn}[thm]{Definition}
\newcommand{\ev}{\operatorname{ev}}
\theoremstyle{remark}
\numberwithin{equation}{section}
\def\bp{\mathbf{p}}
\def\bv{\mathbf{v}}
\def\bu{\mathbf{u}}
\def\bfw{\mathbf{w}}
\def\bC{\mathbf{C}}
\def\sp{\text{sp}}
\newcommand{\delt}[1]{\delta_{#1}}
\newcommand{\br}{\mathbf{r}}
\newcommand{\bq}{\mathbf{q}}
\newcommand{\rxy}{\mathbf{r}_{\scriptscriptstyle{X>Y}}}
\newcommand{\ryx}{\mathbf{r}_{\scriptscriptstyle{Y>X}}}
\newcommand{\axy}{\mathbf{a}_{\scriptscriptstyle{X>Y}}}
\newcommand{\ayx}{\mathbf{a}_{\scriptscriptstyle{Y>X}}}
\newcommand{\uxy}{\mathbf{u}_{\scriptscriptstyle{X>Y}}}
\newcommand{\vxy}{\mathbf{v}_{\scriptscriptstyle{X>Y}}}
\newcommand{\vx}{\mathbf{v}_{\scriptscriptstyle{X}}}
\newcommand{\vect}[1]{\mathbf{v}_{\scriptscriptstyle{#1}}}
\newcommand{\prof}{P_{\mathbf{C}}^{\lambda}}
\newcommand{\QC}{\mathbb{R}^{\mathbf{C}}}
\newcommand{\bw}{B^\lambda_{\mathbf{w}}}
\newcommand{\bwp}{B^\lambda_{\mathbf{w}}(\mathbf{p})}
\def\4tabloid(#1, #2, #3, #4){
	\begin{array}{|c|} \hline #1 \\ \hline #2  \\ \hline #3 \\  \hline #4 \\ \hline
	\end{array}
}
\newcommand{\hgt}{\operatorname{ht}}
\newcommand{\clxy}[2]{\mathbf{C}_{\lambda}^{X=#1,Y=#2}}
\newcommand{\dsum}{\displaystyle \sum}
\begin{document}
	
	\title{The Euclidean geometry of \\ cardinal welfare functions}
	\author{Tim Ridenour and Prasad Senesi}

	\maketitle

\begin{abstract} 
	We exploit the standard inner product of Euclidean space to provide a new direction from which one can understand and analyze certain voting methods.  Using this perspective along with the action of the symmetric and special orthogonal groups on the vector space of profiles, we extend some natural voting criteria to ballots of arbitrary composition type.  
\end{abstract}
	
	\section{Introduction}

	The mathematical foundations of voting theory date back to the late 18th century, when Jean-Charles Borda introduced the Borda Count Method, and the Marquis de Condorcet wrote about his now well-known paradox. This discipline saw steady development leading to Kenneth Arrow's 1950 impossibility theorem \cite{arrow} for which he was later awarded a Nobel prize.  Subsequent decades saw further contributions to social choice theory, using a greater variety of mathematical tools of increasing sophistication, including - but not limited to - probability theory, geometry, topology, and algebra.  In the 1990's, Donald Saari, a mathematician and economist at UC Irvine, began to use geometric methods to investigate the structure of voting systems and to understand and generate paradoxes of voting theory. Among his many effective approaches, one was the representation of profiles (collections of ballots) as elements of a vector space. More recently, Michael Orrison and his colleagues at Harvey Mudd College have recast some of Saari's geometric methods in an algebraic framework (\cite{orrison}, \cite{Crisman}) and employed the action and representation theory of the symmetric group on vector spaces of profiles. 		
		
	Here, we rely heavily - and build - upon the framework introduced by Saari and Orrison in \cite{saari} and \cite{orrison}. In particular, a significant portion of the vector space formalism we describe here is either explicitly or implicitly found in \cite{orrison}.  Foremost, we are concerned not only with fully-ranked ballots of candidates, but with a vector space of `partially ranked' ballots of arbitrary composition type $\lambda = (\lambda_1, \ldots, \lambda_m)$, where $\lambda_i > 0$ and $\sum \lambda_i = |\mathbf{C}|$ (where $\mathbf{C}$ is the collection of all candidates).  These are ballots for which there are $\lambda_1$ first--place candidates, $\lambda_2$ second--place candidates, and so on.  Some practical applications for such partially ranked ballots can be found in \cite{urken}.

The well-known family of positional voting methods plays a central role here; indeed, we prove that all neutral linear cardinal welfare functions are of this form - and repeatedly utilize this fact.  At the same time, the most obvious limitation of our scope is that, for most of our results, we are restricted to these linear functions.  Previous important contributions to social choice theory, from K. Arrow (\cite{arrow},\cite{arrow2}), A. Sen (\cite{sen77},\cite{sen79}), and K. Suzumura (\cite{suzumura83},\cite{suzumura16}), to name only a few, make no such restrictions.  Nevertheless, there is a compelling presence of algebraic and geometric structure in this limited context. 

	The results in \cite{orrison} are built upon a novel use of the representation theory of the symmetric group on the space of voting profiles.  While this group action plays a role here, our contribution is the emphasis and exploitation of a set of distinguished `results vectors' $\vx$ which, by their inner products with profile vectors, completely determine the outcome of an election using a positional voting method. Our more elementary approach is to exploit the Euclidean geometry - and inner product - of the vector space of profiles and of election outcomes. The geometry of this interplay provides a novel and illuminating perspective from which we can examine the behavior of linear voting methods.

	As a proof-of-concept for our formalism, we provide a uniform proof that (almost) all neutral linear social choice functions fail to satisfy the ubiquitous \textit{independence of irrelevant alternatives} criterion.  This is a well-known result for \textit{all} neutral social choice functions (as a consequence of Arrow's Theorem), but only for fully--ranked ballots.  Our results provide a restricted (only to linear functions) extension to all partially ranked ballots.  Also in the generality of partially ranked ballots, we provide criteria for arbitrary neutral linear social choice functions to satisfy Pareto efficiency and a suitably defined \textit{strong majority criterion}.  
	
	 There is undoubtedly much more to say than we have here.  We are hopeful that more representation--theoretic tools can be introduced in this framework, perhaps to understand some more nuanced questions concerning insincere voting and the potential for manipulation of voting methods. The role of the vectors $\vxy$, and their geometry, have some interesting combinatorial content.  Further investigations of these vectors will be addressed in a forthcoming publication.
	 
	\section{Preliminary definitions and notation}
	
	We fix an integer $n$ (the number of candidates), and let $\mathbf{C} = \left\{ C_1, \ldots, C_n \right\}$ (the enumerated list of candidates).  Let $\mathbb{Z}_+$ ($\mathbb{R}_+$) be the set of nonnegative integers (real numbers).  For a positive integer $r$, we set $\left[ r \right] = \left\{ 1, \ldots, r \right\}$.  In the following, for any finite set $S$ we denote by $\mathbb
{R}^S$ the vector space of functions from $S$ to $\mathbb{R}$; by enumerating a basis, we obtain an isomorphism $\mathbb{R}^S \cong \mathbb{R}^{| S |}$.

		Let $\lambda = \left( \lambda_1, \ldots, \lambda_m \right)$ be a composition of $n$.  Henceforth, $|\lambda| $ will denote the number of elements $\lambda_1, \ldots, \lambda_m$ of $\lambda$, and $k$ will denote the multinomial coefficient $\displaystyle{\frac{n!}{\prod \lambda_i !}}$. Let $\mathbf{C}_\lambda$ be the set of all tabloids of shape $\lambda$ which are obtained by labeling the corresponding Young diagram with the candidates $C_1, \ldots, C_n$.  Alternatively $\mathbf{C}_\lambda$ consists of all functions $b: \mathbf{C} \rightarrow \left[ m \right] $ such that $| b^{-1}(i)| = \lambda_i$.  For a fixed candidate $X \in \mathbf{C}$, we define the evaluation map $\ev_X: \mathbf{C}_\lambda \rightarrow \left[ m \right]$ by $\ev_X(b) = b(X)$.  
		
		\subsubsection*{Example}  Let $\mathbf{C} = \left\{ A, B, C, D, E, F \right\}$, and $\lambda = (2, 1, 3)$.  Then $| \lambda| = 3$, and $\prof$ is spanned by $\mathbb{R}$--linear combinations of tabloids of the form 
		\[   b_1 =  {\begin{tabular}{| c  c  c |} 
 \cline{1-2}
 B & D & \multicolumn{1}{| c}{ } \\ 
 \cline{1-3}  
A & C & F \\ 
\cline{1-3}
\multicolumn{1}{| c |}{E}
\\ \cline{1-1}
\end{tabular}}\; , \; b_2 =  {\begin{tabular}{| c  c  c |} 
 \cline{1-2}
 A & B & \multicolumn{1}{| c}{ } \\ 
 \cline{1-3}  
E & F & C \\ 
\cline{1-3}
\multicolumn{1}{| c |}{D}
\\ \cline{1-1}
\end{tabular}}\; , \ldots \]  
We have $\ev_D(b_1) = 1$ and $\ev_D(b_2) = 3$, for example.
	\\
	\mbox{} \hfill \textit{End of example.}
	\\

		Let $ P_{ \mathbf{C}, \lambda} = \displaystyle{ \mathbb{R}^{\mathbf{C}_\lambda} \cong \bigoplus_{b \in \mathbf{C}_\lambda} \mathbb{R}b }$.  We refer to an element of $\mathbf{C}_\lambda$ as a \textit{ballot} and an element of $P_{\mathbf{C}, \lambda}$ as a \textit{profile}.   Any element $\bp \in P_{\mathbf{C}, \lambda}$ is a map $\bp: \mathbf{C}_\lambda \rightarrow \mathbb{R}$; the scalar $\bp(b)$ is the \textit{coefficient} of $b$ in $\bp$.  For example, if $\mathbf{C} = \left\{ A, B, C, D, E, F \right\}$ and $\lambda = (2, 3, 1)$, a typical profile looks like 
		
		\[ 0.7 \; {\begin{tabular}{| c  c  c |} \cline{1-2}
A & F & \multicolumn{1}{| c}{ } \\ \cline{1-3}  
B & C & D \\ \cline{1-3}
\multicolumn{1}{| c |} E  \\ \cline{1-1}
\end{tabular}}  + 5 \; {\begin{tabular}{| c  c  c |} \cline{1-2}
B & E & \multicolumn{1}{| c}{ } \\ \cline{1-3}  
A & F & D \\ \cline{1-3}
\multicolumn{1}{| c |} C  \\ \cline{1-1}
\end{tabular}} - 3 \;  {\begin{tabular}{| c  c  c |} \cline{1-2}
A & F & \multicolumn{1}{| c}{ } \\ \cline{1-3}  
B & C & D \\ \cline{1-3}
\multicolumn{1}{| c |} E  \\ \cline{1-1}
\end{tabular}}\; . \]

	  We say a profile $\bp$ is \textit{nonnegative} if $\bp(b) \geq 0$ for all ballots $b$.  	 If $\bp$ is nonnegative, we should think of $\bp$ as a pre-sorted `collection of ballots', wherein the presence of non-integer coordinates represents the notion that each voter can `split' her vote among the ballots.   Our definition of a profile does not allow us to distinguish ballots from individual voters; in fact these elements of $\prof$ should be (and are sometimes) called \textit{tabulated} profiles.  In this sense our profiles are anonymous, and their definition restricts our study here to anonymous voting methods (those which treat all candidates equally).  
	  
	  We identify $\mathbf{C}_\lambda$ with a basis of $\prof$ by associating to each $b \in \mathbf{C}_\lambda$ the delta function $\delta_{b}$, and we define an inner product on $\prof$ by 
		\[  \bp \cdot \bq = \sum_{ b\in \mathbf{C}_\lambda } \bp(b) \bq(b). \]

			 We note that any ballot naturally provides a pairwise ranking of candidates: the ballot $b$ `ranks $X$ over $Y$' if $b(X) < b(Y)$ (i.e., if $X$ appears above $Y$ in the corresponding tabloid).  If a ballot occurs with a negative coefficient ($\bp(b) < 0$), we will postulate that this ballot provides pairwise rankings which are opposite to $b$: if $b$ (which we can naturally identify with the profile $\delta_b$) ranks $X$ over $Y$, then $-\delta_b$ ranks $Y$ over $X$ - see \textbf{The fundamental geometric relations} in Section 4 below.  However, we do not identify $-\delta_b$ with any nonnegative profile.

			Let $X, Y$ be two candidates in $\mathbf{C}$, and let $ \mathbf{C}^{X>Y}_\lambda  = \left\{ b \in  \mathbf{C}_\lambda : b(X) < b(Y) \right\}$.  Note the alternate directions of the inequalities: the superscript of $\mathbf{C}^{X>Y}_\lambda $ reflects the fact that $X$ is preferred over (`better than') $Y$, but this is true for a ballot if the ballot maps $X$ to an integer \textit{less than} $Y$ (hence the condition $b(X) < b(Y)$). The sets $\mathbf{C}^{X=Y}_\lambda $, $\mathbf{C}^{X<Y}_\lambda $ are defined similarly.  We define the subspace $P_{\mathbf{C}, \lambda}^{X>Y}$ of $P_{\mathbf{C}, \lambda} $ as
		\[ P_{\mathbf{C}, \lambda}^{X > Y} = \text{span} \left\{ \delta_b :  b \in \mathbf{C}^{X>Y}_\lambda  \right\} . \]
		  We note here that a profile in $P_{\mathbf{C}, \lambda}^{X > Y}$ does not necessarily rank $X$ over $Y$: negative coefficients, which `reverse' the preference of $X$ to $Y$,  may be present.
		The subspaces $P_{\mathbf{C}, \lambda}^{Y>X}$, $P_{\mathbf{C}, \lambda}^{X=Y}$ are defined similarly.  Clearly we have a vector space direct sum $P_{\mathbf{C}, \lambda} = P_{\mathbf{C}, \lambda}^{X>Y} \oplus P_{\mathbf{C}, \lambda}^{X<Y} \oplus P_{\mathbf{C}, \lambda}^{X=Y} $, and we denote by $\pi_{X > Y}$, $\pi_{X < Y}$, and $\pi_{X=Y}$ the corresponding projections onto these subspaces.  
		
		We define some distinguished vectors which will play a significant role in arguments to follow.  We let $\mathbf{1} := \sum_{b} \delta_b$, the \textit{unit profile}.  For any two candidates $X, Y$, we define the profile vectors $\axy$ and $\rxy  $  as 
\[ \axy = \sum_{b(X) < b(Y)} \delta_b, \; \; \text{ and } \; \;  \rxy =\axy - \ayx . \] 
We collect a few basic properties of these vectors which follow directly from their definitions: 
\begin{prop}\label{rproperties} Let $X$ and $Y$ be any candidates.  Then 
\begin{enumerate}
\item $\rxy \perp \mathbf{1}$. 
\item $\rxy = - \ryx$. 
\item For any $\bq \in \mathbb{R}\mathbf{1}$, $\bq \cdot \axy = \bq \cdot \ayx$. 
\end{enumerate}
\end{prop}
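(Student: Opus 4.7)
The plan is to dispatch the three items in the order (2), (1), (3), since each subsequent item can lean on the previous.

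For item (2), the identity is essentially immediate from the definition: swapping the roles of $X$ and $Y$ in the expression $\rxy = \axy - \ayx$ gives $\ryx = \ayx - \axy$, so $\rxy + \ryx = 0$. No real work is needed beyond unpacking the notation.

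For item (1), I would compute $\rxy \cdot \mathbf{1}$ explicitly. Since $\mathbf{1} = \sum_b \delta_b$ and the basis $\{\delta_b\}$ is orthonormal under the inner product, the inner product $\axy \cdot \mathbf{1}$ simply counts the cardinality $|\mathbf{C}_\lambda^{X>Y}|$, and similarly $\ayx \cdot \mathbf{1} = |\mathbf{C}_\lambda^{Y>X}|$. The substantive step is to observe that these two cardinalities are equal: the transposition of $X$ and $Y$ in a tabloid induces a bijection
\[
\mathbf{C}_\lambda^{X>Y} \longleftrightarrow \mathbf{C}_\lambda^{Y>X},
\]
sending $b$ to the ballot $b'$ defined by $b'(X) = b(Y)$, $b'(Y) = b(X)$, and $b'(Z) = b(Z)$ for all other candidates $Z$. (Ballots with $b(X) = b(Y)$ lie in $\mathbf{C}_\lambda^{X=Y}$ and are irrelevant here; this is where one must be a little careful in the partially ranked setting.) This bijection gives equality of the two counts, hence $\rxy \cdot \mathbf{1} = 0$.

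For item (3), the cleanest route is to deduce it from (1): for $\bq = c\mathbf{1}$ with $c \in \mathbb{R}$, we have $\bq \cdot (\axy - \ayx) = c\,(\mathbf{1} \cdot \rxy) = 0$ by (1), and rearranging yields $\bq \cdot \axy = \bq \cdot \ayx$. Alternatively, one can argue directly via the same bijection used in (1). I expect no real obstacle; the only point requiring attention is explicitly accounting for the tabloids in $\mathbf{C}_\lambda^{X=Y}$ so that the bijection argument in (1) is unambiguous in the partially ranked setting.
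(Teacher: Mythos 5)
Your proof is correct; the paper offers no proof at all for this proposition, stating only that the properties ``follow directly from their definitions,'' and your argument (in particular the $X \leftrightarrow Y$ swap bijection between $\mathbf{C}_\lambda^{X>Y}$ and $\mathbf{C}_\lambda^{Y>X}$ for item (1), which is well defined since swapping the values $b(X)$ and $b(Y)$ preserves the composition type) is exactly the natural fleshing-out of that claim. Deriving (3) from (1) by linearity is also the intended shortcut, so there is nothing to add.
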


\subsubsection*{Example}  Let $\lambda = (2,2)$.  We can enumerate the elements of $\mathbf{C}_\lambda$: 

\begin{center} 
\parbox{3 in}{		

\[  \begin{array}{cccccc}
b_1   &  b_2	& b_3 & b_4 & b_5 & b_6 \\
\begin{tabular}{|  c  c |} 
 \cline{1-2}
  A & B  \\ 
 \cline{1-2}  
C & D  \\ 
\cline{1-2}
\end{tabular}
 & 
\begin{tabular}{|  c  c |} 
 \cline{1-2}
   A&   C  \\ 
 \cline{1-2}  
B & D  \\ 
\cline{1-2}
\end{tabular}
& 
\begin{tabular}{|  c  c |} 
 \cline{1-2}
  A & D  \\ 
 \cline{1-2}  
B & C  \\ 
\cline{1-2}
\end{tabular}
&
\begin{tabular}{|  c  c |} 
 \cline{1-2}
  B & C  \\ 
 \cline{1-2}  
A & D  \\ 
\cline{1-2}
\end{tabular}
&
\begin{tabular}{|  c  c |} 
 \cline{1-2}
  B & D  \\ 
 \cline{1-2}  
A & C  \\ 
\cline{1-2}
\end{tabular}
&
\begin{tabular}{|  c  c |} 
 \cline{1-2}
  C & D  \\ 
 \cline{1-2}  
A & B  \\ 
\cline{1-2}
\end{tabular}
\end{array} 
\] 	

With this ordered basis, we have 
\[ \mathbf{r}_{\scriptscriptstyle{A>B}} = ( 0, 1, 1, -1, -1, 0), \; \; \; \mathbf{r}_{\scriptscriptstyle{A>C}} = ( 1, 0, 1, -1, 0, -1), \; \; \; \ldots  \] 

}
\end{center}

		For a profile $\mathbf{p} \in \prof$ and a ballot $b \in \mathbf{C}_\lambda$, $\left| \mathbf{p}(b) \right|$ is the `number of people who submitted the ballot $b$'.  The \textit{height} of a profile $\mathbf{p}$ is defined as  $\hgt(\mathbf{p}) = \sum_{b\in \mathbf{C}_\lambda} \left| \bp(b) \right|$; this integer counts the `number of voters submitting ballots'.   The \textit{space of $\bC$--orderings}  is the vector space $\QC$, as each $\lambda \in \QC$ provides a weak ordering $\preceq_{\lambda} $ on the set of candidates: for $\lambda \in \QC$, $X \preceq_{\lambda} Y$ if and only if $\lambda(X) \leq \lambda(Y)$.  
		
		A \textit{cardinal welfare function} (CWF) is a map $F: \prof \rightarrow \QC$.   By the above comment, any CWF provides, for a profile $\bp$, a weak ordering of the candidates.  Our terminology is motivated by the standard terminology in the literature, where a \textit{social choice function} is traditionally a function on $\mathbb{Z}_+$--linear combinations of $\mathbf{C}_{(1,\ldots, 1)}$ which returns a subset of $\mathbf{C}$ (the `winners' of the election), and a \textit{social welfare function} is a function on the same domain which returns a weak ordering on $\mathbf{C}$.  The CWFs we define provide more than just a weak ordinal ranking; they tell us \textit{how much} one candidate is preferred over another.  Any cardinal ranking provides an ordinal ranking; hence, any CWF provides a social welfare function.  And any social welfare function provides a social choice function (simply by choosing the `top--ranked' candidates).  We will refer to any of these functions as a \textit{voting method}.  
		
		\subsection*{Examples of various voting methods}
		
		\begin{itemize}
			\item Let $\lambda = (1,\ldots, 1)$.  The function with domain $\bigoplus_{b \in \mathbf{C}} \mathbb{Z}_+ \delta_b$ which selects the candidate(s) with a plurality of first--place votes (the so-called \textit{plurality method}) is a social choice function (but is not a social welfare function).  
			\item Let $\lambda= (1, 1, \ldots, 1)$, with $| \mathbf{C}| = n$.  Define a function $F$ that awards points to a candidate $X$ as follows: for a ballot $b$,  
					\[  F(\delta_b)(X) =  \begin{cases}
						n, & b(X) = 1 \\
						n-1, & b(X) = 2 \\
						 \vdots &  \; \; \; \; \vdots \\
						1, & b(X) = n
					\end{cases} \] 
				Any $\mathbb{Z}_+$--linear combination of ballots $\bp$ then awards a point total to each candidate, calculated by summing over each ballot in $\bp$.  The function with domain $\bigoplus_{b \in \mathbf{C}} \mathbb{Z}_+ \delta_b$ which assigns to each candidate their corresponding point total is a CWF.  This is the well--known \textit{Borda Count method}, used in some real--world elections, including parliamentary elections of Nauru, and the selections of the Heisman trophy winner and NFL MVP.  The Borda Count method is a CWF.
				\item Let $X \in \mathbf{C}$, and define $F(\bp) = X$ for all $\bp \in \prof$.  This constant function at $X$ is sometimes called \textit{monarchy}.  This is a social choice function.
				\item If a candidate has a majority of first--place votes, declare that candidate the winner.  Otherwise, remove from the ballots the candidate with the least number of first place votes (preserving all remaining ordinal relations), and look again for a majority candidate.  Repeat this process until a majority candidate is found (or the race ends in a tie between two candidates).  This is \textit{Hare's Method}.  This is also a social choice function.
		\end{itemize}
				We refer the reader to \cite{robinson} for more examples of voting methods.  
		
		Although we do not require CWFs to be linear, our focus here will be on those that are.  We will say $F$ is \textit{trivial} if, for all $\bp \in \prof$, $F(\bp) \in \mathbb{R}\mathbf{1}$ (the trivial CWFs are just those which return an all--way tie for any profile).  By the definitions given for $\mathbb{R}^{\mathbf{C}}$ and $\prof$, any CWF treats all voters equally; i.e., it is an \textit{anonymous} voting method.

		  For $\bv, \bfw \in \mathbb{R}^k$, we will denote the angle between $\bv$ and $\bfw$ by $\angle (\bv, \bfw)$.  The orthogonal complement of $\mathbb{R}\mathbf{v}$ is 
	\[ \mathbb{R}\mathbf{v}^\perp = \left\{ \mathbf{w} \in \mathbb{R}^k : \mathbf{v} \cdot \mathbf{w} = 0 \right\}, \]
which we will denote by $\mathbf{v}^\perp$ for brevity.  Let $(\mathbf{v})_+ = \left\{ \mathbf{w} \in \mathbb{R}^k : \mathbf{v}\cdot \mathbf{w} > 0 \right\}$; this is the \textit{positive half--space} associated with $\mathbf{v}$.  Let $(\bv)_-  = - (\bv)_+$.  For any $\mathbf{v} \in \mathbb{R}^k$, we have a disjoint union  
	$  \mathbb{R}^k = \mathbb{R}\mathbf{v}_+ \cup \mathbb{R}\mathbf{v}^\perp \cup \mathbb{R}\mathbf{v}_- $. These half--spaces will play a prominent role in our description and analysis of voting criteria.  We state several elementary but relevant properties here: 
	
	\begin{prop}\label{halfspaces}
	Let $V$ be a vector space, and $\bv, \bfw \in V$.  The following are equivalent: 
	\begin{enumerate}
		\item $\bv \parallel \bfw$ 
		\item $(\bv)_+ = \pm (\bfw)_+$
		\item $(\bv)_+ \subseteq  \pm (\bfw)_+$
	\end{enumerate}
	\end{prop}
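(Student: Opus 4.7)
The plan is to argue via the cycle (1) $\Rightarrow$ (2) $\Rightarrow$ (3) $\Rightarrow$ (1), where the first two implications are essentially trivial and all of the content sits in (3) $\Rightarrow$ (1). First I would handle the degenerate cases: if $\bv = 0$ then $(\bv)_+ = \emptyset$, and $\bv$ is (by convention) parallel to any vector, so (1), (2), (3) all hold; if $\bfw = 0$ but $\bv \ne 0$, then $\bv \in (\bv)_+$ but $(\bfw)_+ = \emptyset$, so (3) fails, matching the failure of (1). So I assume $\bv, \bfw \ne 0$ from now on.

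For (1) $\Rightarrow$ (2): write $\bv = c\bfw$ with $c \ne 0$. Then $\bv \cdot \bu = c(\bfw \cdot \bu)$, so the sign of $\bv \cdot \bu$ agrees with the sign of $\bfw \cdot \bu$ when $c > 0$ and is reversed when $c < 0$; in either case $(\bv)_+ = \pm(\bfw)_+$. The implication (2) $\Rightarrow$ (3) is immediate from equality of sets.

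For the main step (3) $\Rightarrow$ (1), I would proceed by contrapositive. Suppose $\bv \not\parallel \bfw$. Decompose $\bfw = c\bv + \bv^\perp$ with $\bv^\perp \perp \bv$; non-parallelism forces $\bv^\perp \ne 0$. Now consider the one-parameter family
\[ \bu_t = \bv + t\,\bv^\perp, \quad t \in \mathbb{R}. \]
Since $\bv \cdot \bu_t = \|\bv\|^2 > 0$ for every $t$, each $\bu_t$ lies in $(\bv)_+$. On the other hand,
\[ \bfw \cdot \bu_t = c\|\bv\|^2 + t\|\bv^\perp\|^2, \]
which is strictly increasing in $t$ and ranges over all of $\mathbb{R}$. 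Hence $(\bv)_+$ contains both vectors with $\bfw \cdot \bu_t > 0$ (for $t$ sufficiently large positive) and vectors with $\bfw \cdot \bu_t < 0$ (for $t$ sufficiently large negative). Consequently $(\bv)_+ \not\subseteq (\bfw)_+$ and $(\bv)_+ \not\subseteq (\bfw)_- = -(\bfw)_+$, contradicting (3).

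The only mildly delicate step is the last one: one must choose a perturbation that preserves membership in $(\bv)_+$ while moving freely across the hyperplane $\bfw^\perp$, and the orthogonal decomposition of $\bfw$ relative to $\bv$ is the natural way to obtain such a perturbation. Everything else is a matter of unwinding definitions of the half-spaces.
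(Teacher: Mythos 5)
Your argument is correct, and it shares the overall skeleton of the paper's proof --- the forward implications dismissed as immediate, and (3) $\Rightarrow$ (1) handled by contradiction via an orthogonal decomposition --- but the decomposition and the witness are genuinely different. The paper decomposes $\bv = \bv_1 + \bv_2$ onto $\mathbb{R}\bfw$ and $\bfw^\perp$; non-parallelism forces $\bv_2 \neq 0$, the observation $\bv \cdot \bv_2 = |\bv_2|^2 > 0$ places $\bv_2$ in $(\bv)_+$, and since $\bv_2 \perp \bfw$ it cannot lie in $\pm(\bfw)_+$, so a \emph{single} vector already contradicts (3). You instead decompose $\bfw$ relative to $\bv$ and sweep the one-parameter family $\bu_t = \bv + t\,\bv^\perp$ through $(\bv)_+$, showing that $\bfw \cdot \bu_t$ takes every real value and hence that $(\bv)_+$ meets \emph{both} open half-spaces of $\bfw$. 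Your route proves slightly more than is needed, at the cost of an extra computation; the paper's is the more economical one, exploiting the fact that $\pm(\bfw)_+$ is disjoint from the hyperplane $\bfw^\perp$, so escaping into $\bfw^\perp$ alone suffices. Both arguments are elementary and both rest on the same basic fact that a nonzero orthogonal component has positive inner product with the original vector.

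One small slip in your handling of the degenerate case: when $\bv = 0$ and $\bfw \neq 0$ you assert that all three conditions hold, but $(\bv)_+ = \emptyset$ while $\pm(\bfw)_+$ is nonempty, so (2) fails there even though (3) holds vacuously. The proposition is implicitly stated for nonzero vectors (the paper ignores this case entirely), so nothing essential is lost, but the claim as written is not right.
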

	\begin{proof}
		Let $\bv \in V$ and $\bv = \bv_1 + \bv_2$ be a decomposition of $\bv$ into two orthogonal components (so that $\bv_1 \cdot \bv_2 = 0$).  If $\bv_1 \neq 0$, then $\bv \cdot \bv_1 = (\bv_1 + \bv_2) \cdot \bv_1 = | \bv_1 |^2 > 0$, and similarly for $\bv_2 $.   
		
		The proofs of (1) $\Rightarrow$ (2) and  (2) $\Rightarrow$ (3) are immediate.
To prove (3) $\Rightarrow$ (1), assume (3) and suppose $\bv \nparallel \bfw$.  Let $\bv = \bv_1 + \bv_2$ be the decomposition of $\bv$ onto $\mathbb{R} \bfw$ and $\bfw^\perp$, respectively, so that $ 0 \neq \bv_2 \in \bfw^\perp$.  Then, since $\bv_2 \cdot \bv > 0$, by (3) we must have $\bv_2 \cdot \bfw \neq 0$, a contradiction. 	
			\end{proof}
\noindent A very minor modification of the proof shows, of course, that 
\[ \bv \in \mathbb{R}_{> 0} \bfw  \; \; \Leftrightarrow \; \;  (\bv)_+ \subseteq (\bfw)_+  \; \; \Leftrightarrow \; \;  (\bv)_+ = (\bfw)_+. \]

 \section{Group actions}
  If $X$ is a set, we denote by $\text{Aut}(X)$ the collection of bijections $X \rightarrow X$ which preserve some structure of $X$.  In particular, if $X$ is a finite set with no additional structure, $\text{Aut}(X)$ is the group of all bijections of $X$.  If $X$ is a vector space, $\text{Aut}(X)$ is the group of all invertible linear maps on $X$.   An action of a group $G$ on $X$ is a group homomorphism $G \rightarrow \text{Aut}(X)$.  If $X$ is a vector space, this provides us with a \textit{representation} of $G$.  There are two group actions we will use.    In all cases, we will denote by $g.x$ the action of a group element $g$ acting on $x$.  
 For an element $x\in X$, we let $G^x = \left\{ g \in G : g.x = x \right\}$ (the \textit{isotropy subgroup} of $x$ in $G$). If $X$ is a vector space, a subspace $W$ of $X$ is \textit{invariant} under $G$ if $g.W \subseteq W$ for all $g \in G$. \subsubsection*{Permutation of candidates}    Let $S_{\mathbf{C}}$ be the symmetric group on the set $\mathbf{C}$ of candidates.  Then $S_{\mathbf{C}}$ acts naturally on $\mathbf{C}_\lambda$ by $\tau.b = b \circ \tau$, hence on $P_{\lambda, \mathbf{C}}$, and also on $\mathbb{R}^{\mathbf{C}}$.

\subsubsection*{Example}  Let $\lambda = (2,2)$, and enumerate the elements of $\mathbf{C}_\lambda$ as in the example above:

\begin{center} 
\parbox{5 in}{		

\[  \begin{array}{cccccc}
b_1   &  b_2	& b_3 & b_4 & b_5 & b_6 \\
\begin{tabular}{|  c  c |} 
 \cline{1-2}
  A & B  \\ 
 \cline{1-2}  
C & D  \\ 
\cline{1-2}
\end{tabular}
 & 
\begin{tabular}{|  c  c |} 
 \cline{1-2}
   A&   C  \\ 
 \cline{1-2}  
B & D  \\ 
\cline{1-2}
\end{tabular}
& 
\begin{tabular}{|  c  c |} 
 \cline{1-2}
  A & D  \\ 
 \cline{1-2}  
B & C  \\ 
\cline{1-2}
\end{tabular}
&
\begin{tabular}{|  c  c |} 
 \cline{1-2}
  B & C  \\ 
 \cline{1-2}  
A & D  \\ 
\cline{1-2}
\end{tabular}
&
\begin{tabular}{|  c  c |} 
 \cline{1-2}
  B & D  \\ 
 \cline{1-2}  
A & C  \\ 
\cline{1-2}
\end{tabular}
&
\begin{tabular}{|  c  c |} 
 \cline{1-2}
  C & D  \\ 
 \cline{1-2}  
A & B  \\ 
\cline{1-2}
\end{tabular}
\end{array} 
\] 	
\\
Let $\tau = (A \; B) \in S_{\mathbf{C}}$.  Then $\tau$ acts as follows: on $\mathbf{C}_\lambda$, 
\[ \tau(b_1) = b_1, \; \; \tau(b_2) = b_4, \; \; \tau(b_3) = b_5, \; \; \tau(b_6) = b_6.\]
On $\prof$, with respect to the above fixed basis, $\tau$ acts via matrix multiplication by
\[  \begin{bmatrix}
	 1 & 0 & 0 & 0 & 0 & 0 \\
	 0 & 0 & 0 & 1 & 0 & 0 \\
	 0 & 0 & 0 & 0 & 1 & 0 \\
	 0 & 1 & 0 & 0 & 0 & 0 \\
	 0 & 0 & 1 & 0 & 0 & 0 \\
	 0 & 0 & 0 & 0 & 0 & 1  
\end{bmatrix}.\]  
}
\end{center}

\subsubsection*{Rotations of the profile space}  The special orthogonal group $SO(k)$ is the group of all orientation--preserving isometries of $\mathbb{R}^k$.  It can be identified with collection of $k$-square matrices $A$ satisfying $A^TA = I = A A^T$ and $\det(A) = 1$.  The geometric properties of $SO(k)$ are well-known; see \cite{grove} or \cite{stillwell}, for example. As $k = \dim\left( \prof \right)$, $SO(k)$ acts (as `generalized rotations', see below) on $\prof$.  This action will be exploited in Section \ref{criteriaresults}.  We collect several important properties of the action of $SO(k)$ on $\mathbb{R}^k$ in the following proposition.  
	\begin{prop}\label{rotationproperties} \mbox{}
	\begin{enumerate}
	\item  Action by $SO(k)$ preserves the Euclidean norm, inner product, and angle between vectors in $\prof$.
 \item 	Suppose $G$ is a subgroup of $SO(k)$, and $V$ is a subspace of $\mathbb{R}^k$.  Then $V$ is $G$--invariant if and only if $V^\perp$ is $G$--invariant.  In particular, $\mathbb{R}\mathbf{v}$ is a $G$--invariant subspace of $\prof$ if and only if $\mathbb{R}\mathbf{v}^\perp$ is a $G$--invariant subspace.
 \item Let $T \in SO(k)$.  If $T\left( (\bv)_+ \right) \subseteq (\bv)_+$, then $T(\bv) = \bv$. 
 \item If $\br \in \mathbb{R}^k$, $\br \neq 0$, the only fixed points of the isotropy subgroup $SO(k)^\br$ are the scalar multiples of $\br$.  For $T \in SO(k)^\br$ and $\bv \in \mathbb{R}^k$, $\angle(\bv , \br) = \angle(T(\bv), \br)$.   
\end{enumerate}
	\end{prop}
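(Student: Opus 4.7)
The plan is to dispatch the four parts in order, using only the matrix definition of $SO(k)$ together with the half-space machinery already set up in Proposition \ref{halfspaces}. Part (1) is the routine starting point: from $T^T T = I$ one gets $(T\bv)\cdot(T\bfw) = \bv^T T^T T \bfw = \bv\cdot\bfw$, so inner products are preserved; norms are the diagonal case, and angles follow from the cosine formula.

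For (2), I would assume $V$ is $G$-invariant and show $V^\perp$ is too, with the converse free from $(V^\perp)^\perp = V$. Given $\bfw\in V^\perp$, $g\in G$, and $\bv\in V$, the key move is to slide $g$ across the inner product: by (1),
\[ (g.\bfw)\cdot\bv = (g.\bfw)\cdot(g.g^{-1}.\bv) = \bfw\cdot(g^{-1}.\bv), \]
which vanishes since $g^{-1}.\bv\in V$. The parenthetical statement about $\mathbb{R}\bv$ is just the special case $V=\mathbb{R}\bv$.

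Part (3) is where the main conceptual step sits. My plan is first to establish the clean identity $T((\bv)_+) = (T\bv)_+$ for any inner-product-preserving $T$, which follows by unfolding definitions and using (1). Once we have it, the hypothesis $T((\bv)_+)\subseteq(\bv)_+$ becomes $(T\bv)_+\subseteq(\bv)_+$, and the strengthened form of Proposition \ref{halfspaces} noted immediately after its proof forces $T\bv\in\mathbb{R}_{>0}\bv$. Norm preservation from (1) then pins the scalar at $1$, so $T\bv = \bv$. I expect the subtle point here to be remembering to invoke the positive-scalar refinement of Proposition \ref{halfspaces}, not the two-sided form (which would only give $T\bv = \pm\bv$).

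For (4), I would decompose $\bv = c\br + \bv'$ with $\bv'\in\br^\perp$. Since $\mathbb{R}\br$ is trivially $SO(k)^{\br}$-invariant, part (2) gives that $\br^\perp$ is too, and passing to this restriction identifies $SO(k)^{\br}$ with $SO(\br^\perp) \cong SO(k-1)$: any isometry fixing $\br$ is block-diagonal in an adapted basis, and the determinant condition forces the lower block into $SO(k-1)$. Provided $k\geq 3$, which holds throughout the paper since $k = \dim\prof$, the group $SO(k-1)$ acts transitively on each sphere in $\br^\perp$ and so has no nonzero common fixed vector; hence $\bv' = 0$. The angle identity is then immediate from (1) applied to the pair $\bv,\br$, using $T\br=\br$.
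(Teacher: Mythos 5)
Your proposal is correct and follows essentially the same route as the paper's proof: part (2) by sliding the group element across the inner product, part (3) by reducing the hypothesis to $(T\bv)_+\subseteq(\bv)_+$ and invoking the positive-scalar refinement of Proposition \ref{halfspaces} plus norm preservation, and part (4) by block-decomposing $SO(k)^{\br}$ as $SO(k-1)$ acting on $\br^\perp$ with no nonzero fixed vectors. The only difference worth noting is that you explicitly flag the $k\geq 3$ hypothesis needed in (4), a point the paper passes over by simply citing that $SO(k-1)$ has no fixed points.
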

	\begin{proof}
		(1) is well-known. We prove (2), (3) and (4).  For (2), suppose that $V$ is $G$--invariant, and let $\bv \in V$.   Let $T \in G$.  Then $T^{-1} \in G$, and for any $\bfw \in V^\perp$, we have 
		\[ 0 = \bv \cdot \bfw =\left( T^{-1}.\bv  \right) \cdot \bfw = \bv \cdot \left( T.\bfw \right).  \]
		
		\pagebreak 
		Hence, $V^\perp$ is $G$--invariant. The opposite direction now follows from the equality $V = \left( V^\perp \right)^\perp$.

		To prove (3), we first claim that $(T.\bv)_+ \subseteq T(\bv_+)$.  Indeed, if $\bu \in (T.\bv)_+$, we have
		\[ \left( T^{-1}. \bu \right) \cdot \bv = \bu \cdot \left( T.\bv \right) > 0; \] 
		hence, $\bu \in T(\bv_+)$.  Therefore $(T. \bv)_+ \subseteq (\bv)_+$, and by Proposition \ref{halfspaces}, we have $T.\bv = c \bv$ for some $c > 0$.  Since $T$ is an isometry, we must have $c = 1$.

	   To prove (4), we can rotate $\br$ so that $\br = (0, \ldots, 0, 1)$, and identify $SO(k)^\br$ with the subgroup of matrices 
	   \[  \begin{bmatrix}
	   	\tilde{A} & 0 \\ 
	   	0 & 1 
	   \end{bmatrix},\] 
	   where $\tilde{A}$ is an element of $SO(k-1)$.  Indeed, any such matrix is an element of $SO(k)$ and fixes $\br$.  Conversely if $A(\br) = \br$ then we can write $T$ in the form above for some $(k-1)$--square matrix $\tilde{A}$, and $A \in SO(k)$ guarantees that $\tilde{A}^T \tilde{A} = 1 = \tilde{A} \tilde{A}^T$, $\det(\tilde{A}) = 1$. Then the first statement in (4) follows from the fact that $SO(k-1)$ has no fixed points in $\mathbb{R}^{k-1}$ (see \cite{garrett} for details) .  For the second statement, it is sufficient to show that $\bv \cdot \br = T(\bv) \cdot \br$, which follows because $\br$ is a fixed point of $T$:
	   \[ T(\bv) \cdot \br = T(\bv) \cdot T(\br) = \bv \cdot \br.\]			\end{proof}
	\noindent Because of the properties listed in (4), we call $SO(k)^\br$ the \textit{rotation subgroup of $SO(k)$ with axis} $\mathbb{R} \br$.   The action of an element $g \in SO(k)^\br$ on $\mathbb{R}^k$ can be thought of as a rotation about (the axis determined by) $\br$.

		\section{Positional voting}  
		We define a family of CWFs, the \textit{positional voting methods}, as follows.  Let $W = \mathbb{R}^{\left[ m \right]} \cong \mathbb{R}^m$, and $\mathbf{w} \in W$.  For any candidate $X \in \mathbf{C}$, we define the $X$--positional vector  $\bv_{\scriptscriptstyle{\bfw, X}} := \mathbf{w} \circ \ev_{\scriptscriptstyle{X}}$, which is an element of $\prof$.  Henceforth the weight vector $\bfw$ will usually be fixed and so we will write $\vx = \bv_{\scriptscriptstyle{\bfw, X}}$ for notational convenience unless otherwise necessary.
		
		The $\mathbf{w}$--positional CWF $\bw$ is defined as follows: for $\bp \in \prof$, 
		\[ \bwp : X \mapsto \bp \cdot \vx . \]
		We note that $\bw$ is linear in both $\bp$ and in $\bfw$, and that for any $\bfw \in \mathbb{R}\mathbf{1}$, $\bw$ is a trivial CWF.

		\subsubsection*{Example}
		Let $\mathbf{C} = \left\{ A, B, C \right\}$,  $\lambda = (1,1,1)$.  Then 
		\[ \mathbf{C}_\lambda = \left\{ \; \begin{tabular}{| c  |} 
		\cline{1-1}
		A \\ 
		\cline{1-1}  
		B\\ 
		\cline{1-1}
		C\\
		\cline{1-1}
		\end{tabular} \;  ,   \; \begin{tabular}{| c  |} 
		\cline{1-1}
		A \\ 
		\cline{1-1}  
		C\\ 
		\cline{1-1}
		B\\
		\cline{1-1}
		\end{tabular} \; , \; \begin{tabular}{| c  |} 
		\cline{1-1}
		C \\ 
		\cline{1-1}  
		A\\ 
		\cline{1-1}
		B\\
		\cline{1-1}
		\end{tabular} \; , \; \begin{tabular}{| c  |} 
		\cline{1-1}
		B \\ 
		\cline{1-1}  
		A\\ 
		\cline{1-1}
		C\\
		\cline{1-1}
		\end{tabular} \; , \begin{tabular}{| c  |} 
		\cline{1-1}
		B \\ 
		\cline{1-1}  
		C\\ 
		\cline{1-1}
		A\\
		\cline{1-1}
		\end{tabular} \; , \begin{tabular}{| c  |} 
		\cline{1-1}
		C \\ 
		\cline{1-1}  
		B\\ 
		\cline{1-1}
		A\\
		\cline{1-1} 
		\end{tabular} \;  \right\}  .\] 
		
		\pagebreak 
		
		\noindent The first ballot $\begin{tabular}{| c  |} 
		\cline{1-1}
		A \\ 
		\cline{1-1}  
		B\\ 
		\cline{1-1}
		C\\
		\cline{1-1}
		\end{tabular}$ , for example, is the function which maps $A$ to 1, $B$ to 2, and $C$ to 3.  We fix an ordering $\mathbf{b}_1, \ldots, \mathbf{b}_6$ of $\mathbf{C}_\lambda$ given by the ordering in the set above.  This gives us an ordered basis of $P_{\mathbf{C}, \lambda}$; denote by $\rho: \prof \rightarrow \mathbb{R}^6$ the corresponding isomorphism. Let  $\mathbf{w} = (3,2,1)$.
	Then $\mathbf{v}_{\mathbf{w}, A} = (3, 3, 2, 2, 1, 1)$. Similarly we find $\mathbf{v}_{\mathbf{w}, B} = (2, 1, 1, 3, 3, 2)$, and $\mathbf{v}_{\mathbf{w}, C} = (1, 2, 3, 1, 2, 3)$.  In this case the matrix representation $T_\bfw$ of $\bw$ is   
	\[  T_{\mathbf{w}} = \begin{bmatrix} 
		 | & |  & | \\ 
		\mathbf{v}_{\mathbf{w}, A} & \mathbf{v}_{\mathbf{w}, B} & \mathbf{v}_{\mathbf{w}, C}\\
		| & |  & | 
		\end{bmatrix}= \begin{bmatrix}
	3&2&1 \\ 3& 1 & 2 \\ 2&1&3 \\ 2&3&1 \\ 1 & 3 
	& 2 \\ 1 & 2 & 3  
	\end{bmatrix}.
	\] 
	Let $\mathbf{p}$ be a profile given by 
	\[ \mathbf{p} = \sum_{i=1}^6 r_i \mathbf{b}_i.   \] 
	Then $ \mathbf{p}_\mathbf{w}(A) = \rho(\mathbf{p}) \cdot (3, 3, 2, 2, 1, 1) = 3r_1 + 3r_2 + 2r_3 + 2r_4 + r_5 + r_6 $. \\
	\\
	\mbox{} \hfill \textit{End of example.}

		Let $\vxy = \vect{X} - \vect{Y}$.  With respect to a profile $\bp$ and the positional method $\bw$, a candidate $X$ defeats a candidate $Y$ if and only if $\bw(\bp)(X) > \bw(\bp)(Y)$.  But this inequality is equivalent to $\bp\cdot \vect{X} > \bp \cdot \vect{Y}$, or $\bp \in (\vxy)_+$.    A significant portion of our geometric perspective is based upon the following relations, one of which we submit as a postulate and another that follows easily from the definitions.
		\\
		\\
		\noindent \textbf{The fundamental geometric relations.} Fix a positional voting method $\bw$ and let $\bp$ be a profile. \mbox{}
		\\
		\\
			\textbf{Postulate 1}. \textit{In the profile $\bp$, a candidate $X$ defeats a candidate $Y$ in a head--to--head race if and only if $\bp \in (\rxy)_+$. }\mbox{}
			\\
			\\
		\noindent	\textbf{Proposition 1}. \textit{In the profile $\bp$, the CWF $\bw$ awards  a candidate $X$ more points than a candidate $Y$ if and only if $\bp \in (\vxy)_+$.} \mbox{}\\
	
		Note that the vectors $\rxy$ are determined by $\lambda$, and are a built-in `feature' of the profile space $\prof$.  The vectors $\vxy$, however, are determined by the weight vector $\bfw$.  The moral of the fundamental geometric relations is this: between candidates $X$ and $Y$, to determine who defeats whom in a head-to-head race, or to determine who `wins' with a profile $\bp$ and a positional voting method $\bw$, we don't actually need to count votes or calculate the positional point totals for $X$ and $Y$; we just need to find the angles $\angle(\bp, \rxy)$ or $\angle(\bp, \vxy)$, respectively.  Candidate $X$ is victorious over $Y$ (in either sense) if and only if this angle is acute; i.e., if and only if $\bp$ lies in the positive half-space determined by $\rxy$ or $\vxy$.  
		
		In the diagram below, a vector $\vxy$ is shown, along with its hyperplane $\vxy^\perp$.  Also shown are three profiles $\bp_1, \bp_2, \bp_3$.  In the outcome $\bw(\bp_1)$, $X$ defeats $Y$; in $\bw(\bp_3)$, $Y$ defeats $X$; and in $\bw(\bp_2)$, $X$ and $Y$ tie.
 
		\begin{center} \includegraphics[scale=0.5]{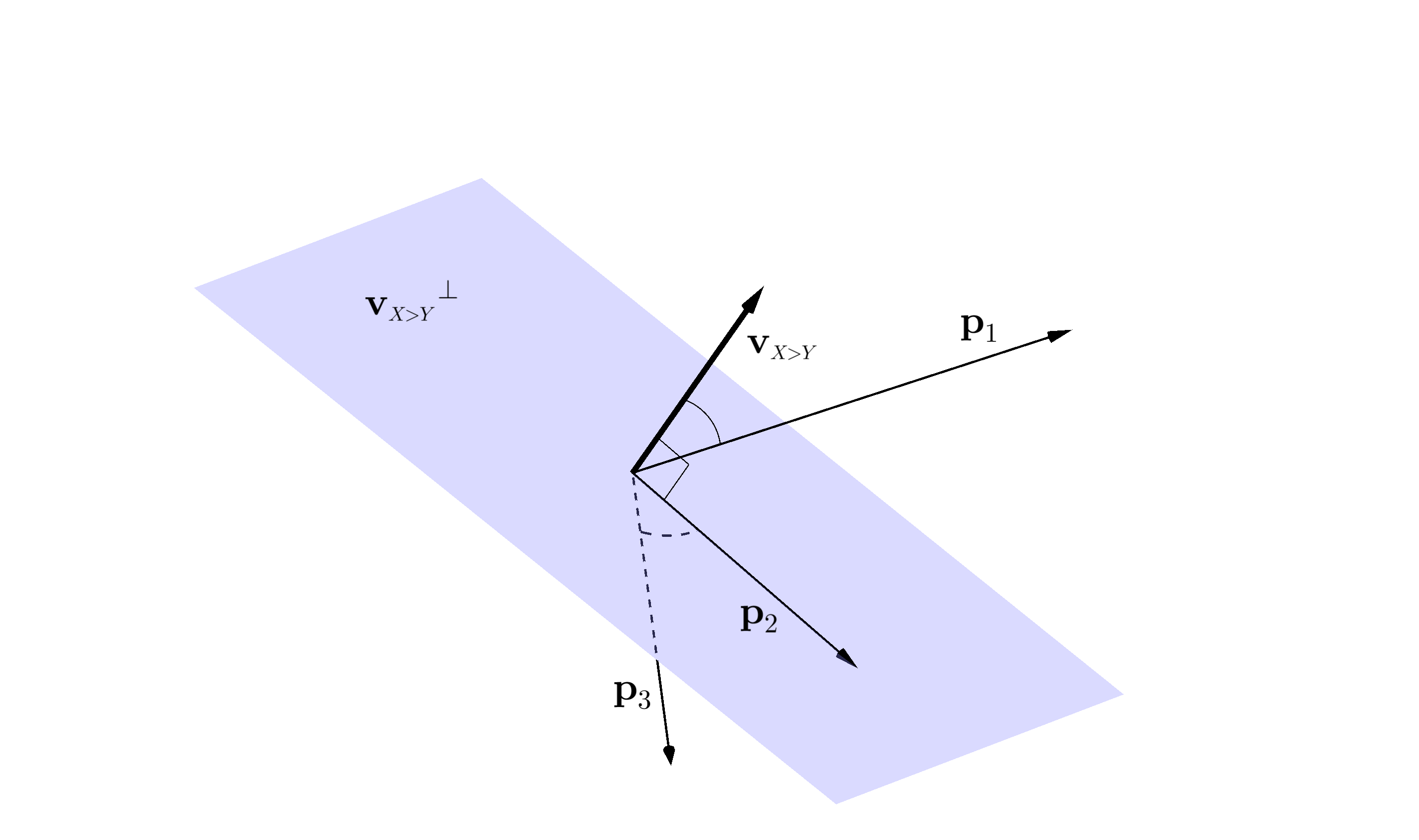} \end{center}
		\mbox{}\\
		We collect a few geometric facts concerning the vectors $\vxy$ and $\rxy$ :
		\begin{prop}\label{vproperties}  Let $X$ and $Y$ be distinct candidates.
		\begin{enumerate}  

			\item   $\mathbf{1} \perp \vxy$.
			\item   Suppose $|\lambda | = 2$.  Then $\vxy \parallel \rxy$. 
			\item   Suppose $| \lambda | > 2$.  If $\vxy \neq 0$, then $\vxy \nparallel \rxy$.  
		\end{enumerate}	
		\end{prop}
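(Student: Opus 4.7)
Here is how I would approach the three parts of Proposition \ref{vproperties}. Throughout, write $\bfw = (w_1, \ldots, w_m)$, so that $\vect{X}(b) = w_{b(X)}$ and hence $\vxy(b) = w_{b(X)} - w_{b(Y)}$ for every ballot $b \in \mathbf{C}_\lambda$.

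For (1), the plan is to use the $S_{\mathbf{C}}$--action, specifically the transposition $\tau = (X\,Y)$. Since $\tau$ acts as a bijection on $\mathbf{C}_\lambda$ and interchanges the quantities $b(X)$ and $b(Y)$, we have $\sum_{b} w_{b(X)} = \sum_{b} w_{b(Y)}$ by reindexing $b \mapsto \tau.b$. Therefore $\mathbf{1} \cdot \vxy = \sum_b (w_{b(X)} - w_{b(Y)}) = 0$.

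For (2), assume $|\lambda| = 2$, so every ballot $b$ has $b(X), b(Y) \in \{1,2\}$. Evaluating on a ballot $b$: if $b(X) = b(Y)$ then $\vxy(b) = 0 = \rxy(b)$; if $b(X) < b(Y)$ then $\vxy(b) = w_1 - w_2$ and $\rxy(b) = 1$; if $b(X) > b(Y)$ then $\vxy(b) = w_2 - w_1$ and $\rxy(b) = -1$. In all three cases $\vxy(b) = (w_1 - w_2)\,\rxy(b)$, so $\vxy = (w_1 - w_2)\,\rxy$.

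For (3), suppose for contradiction that $\vxy = c\,\rxy$ for some $c \in \mathbb{R}$; since $\vxy \neq 0$ we have $c \neq 0$. The key observation is that for any pair $(i, j) \in [m]^2$ with $i \neq j$, there exists a ballot $b$ with $b(X) = i$ and $b(Y) = j$: the remaining $n - 2$ candidates can be distributed to complete a tabloid of shape $\lambda$ because $\lambda_i \geq 1$ and $\lambda_j \geq 1$. Evaluating $\vxy = c\,\rxy$ on such a ballot gives $w_i - w_j = c\cdot \mathrm{sgn}(j-i)$. Thus $w_i - w_j$ depends only on whether $i < j$ or $i > j$, not on the actual values of $i, j$. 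In particular, assuming $m = |\lambda| \geq 3$, the equalities $w_1 - w_2 = c = w_1 - w_3$ force $w_2 = w_3$, and then $w_2 - w_3 = 0 = c$, contradicting $c \neq 0$. Hence $\vxy \nparallel \rxy$.

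The main obstacle here is really only in part (3), where one must verify that every ordered pair $(i,j)$ of row positions is actually realized by some ballot; once this is in hand, the overdetermined linear system on the weights forces the contradiction immediately. Parts (1) and (2) are direct computations using the definitions.
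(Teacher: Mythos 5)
Your proof is correct and follows essentially the same route as the paper's: part (1) via reindexing by the transposition $(X\,Y)$, part (2) by casewise evaluation yielding $\vxy = (w(1)-w(2))\rxy$, and part (3) by showing parallelism forces $w(i)-w(j)$ to be the same constant for all $i<j$, which is impossible unless that constant is zero. Your explicit check that every ordered pair $(i,j)$ is realized by some ballot is a small detail the paper leaves implicit, but it does not change the argument.
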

		\begin{proof} 
		To prove (1), note that since any transposition in $S_{\mathbf{C}}$ permutes $\mathbf{C}_\lambda$, we have 
		\[ \sum_{b \in \mathbf{C}_\lambda} w(b(X)) = \sum_{(X Y) b \, \in \, \mathbf{C}_\lambda} w(b(X)) = \sum_{b \in \mathbf{C}_\lambda} w(b(Y));\]
		hence, $\vxy \cdot \mathbf{1} =  \sum_{b \in \mathbf{C}_\lambda} w(b(X)) -\sum_{b \in \mathbf{C}_\lambda} w(b(Y)) = 0$.
		To prove (2) and (3), first suppose $| \lambda | = 2$.  In this case, for two candidates $X$ and $Y$ and a ballot $b$ we have either $b(X) =1$ and $b(Y) = 2$, $b(X) = 2$ and $b(Y)=1$, or $b(X) = b(Y)$.  Therefore 
	  \[ 
	  	\vxy(b) = \begin{cases}
	  		w(1) - w(2), & b(X) < b(Y) \\ 
	  		0,& b(X) = b(Y) \\
	  		w(2) - w(1), & b(X) > b(Y).
	  	\end{cases}\] 
	  Therefore $\vxy = c \, \rxy$, where $c= w(1) - w(2) $.  
	  
	  Next suppose $|\lambda| > 2$, and assume $\vxy \parallel \rxy$.  This is equivalent to 
	\[  \vxy(b) = \begin{cases}  c, & b(X) < b(Y) \\
 								 0,& b(X) = b(Y)\\
 								 -c, &  b(X) > b(Y)	
 \end{cases}\]
for some $c$.  This, in turn, gives us 
	\[  w(i) - w(j) = c , \; \; \text{for all} \; \; 1 \leq i < j \leq m,\]
	for which $c = 0$ is the only solution. 
		\end{proof}
 These profiles $\left\{ \vxy \right\}_{X, Y}$, and their orthogonality to $\mathbf{1}$, will be used in arguments to follow.

 A CWF $F: P_{\lambda, \mathbf{C}}\rightarrow \mathbb{R}^\mathbf{C}$ is \textit{neutral} if it is $S_{\mathbf{C}}$--equivariant; that is, if $F \circ \tau = \tau \circ F$ for all $\tau \in S_{\mathbf{C}}$.  Clearly any positional voting method is a neutral linear CWF.  Conversely, we gain no generality by considering arbitrary neutral linear CWFs: 
		\begin{thm}\label{linearscf}
			Any neutral linear CWF	  is a positional voting method.  
			\end{thm}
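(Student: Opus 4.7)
Let $F \colon \prof \to \mathbb{R}^{\mathbf{C}}$ be a neutral linear CWF. For each candidate $X$, the map $\bp \mapsto F(\bp)(X)$ is a linear functional on the finite--dimensional inner product space $\prof$, so by (finite-dimensional) Riesz representation there is a unique $\bu_X \in \prof$ with
\[ F(\bp)(X) = \bp \cdot \bu_X \quad \text{for all} \quad \bp \in \prof. \]
My goal is to show $\bu_X = \vect{\bfw, X}$ for some common weight vector $\bfw \in \mathbb{R}^m$ independent of $X$.

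First I would convert neutrality into an equivariance statement for the family $\{\bu_X\}_{X \in \bC}$. Because the $S_{\bC}$--action on $\prof$ permutes the basis $\{\delta_b\}$, it is orthogonal with respect to the inner product, so $(\tau.\bp) \cdot (\tau.\bq) = \bp \cdot \bq$. Unwinding the identity $F(\tau.\bp) = \tau.F(\bp)$ and using uniqueness of $\bu_X$, I expect to obtain the equivariance relation
\[ \bu_{\tau(X)} = \tau.\bu_X \quad \text{for all } \tau \in S_{\bC},\ X \in \bC. \]

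Next, I fix $X$ and specialize to the isotropy subgroup $S_{\bC}^X \cong S_{n-1}$. The equivariance relation above then becomes $\bu_X = \tau.\bu_X$ for every $\tau \in S_{\bC}^X$; hence the coefficient function $b \mapsto \bu_X(b)$ is constant on each $S_{\bC}^X$--orbit in $\bC_\lambda$. The key combinatorial step is to identify these orbits: I will show that two ballots $b, b' \in \bC_\lambda$ lie in the same $S_{\bC}^X$--orbit if and only if $b(X) = b'(X)$, i.e.\ the orbits are precisely the fibers of $\ev_X$. One direction is immediate since fixing $X$ preserves $b(X)$. For the other direction, given $b(X) = b'(X) = i$, the multisets $b^{-1}(j) \setminus \{X\}$ and $b'^{-1}(j) \setminus \{X\}$ have the same cardinality for every $j$ (namely $\lambda_j$ for $j \neq i$ and $\lambda_i - 1$ for $j = i$), so I can produce the required permutation $\tau \in S_{\bC}^X$ carrying $b$ to $b'$. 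This is the main obstacle, but it is mostly bookkeeping.

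With the orbits identified, I conclude that $\bu_X(b)$ depends only on $b(X)$, so $\bu_X = \bfw_X \circ \ev_X$ for some $\bfw_X \in \mathbb{R}^m$. To finish, I would use equivariance once more with a transposition $\tau = (X\,Y)$: from $\bu_Y = \tau.\bu_X$ and a direct computation of both sides evaluated at an arbitrary ballot $b$, I get $\bfw_Y(b(Y)) = \bfw_X(b(Y))$ for every $b$. Since every value in $[m]$ is attained by some $b(Y)$, this forces $\bfw_Y = \bfw_X$, so $\bfw := \bfw_X$ is independent of $X$, giving $\bu_X = \vect{\bfw, X}$ and thus $F = \bw$ as desired.
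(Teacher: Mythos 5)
Your proof is correct and follows essentially the same route as the paper: your Riesz vector $\bu_X$ has coordinates $\bu_X(b) = F(\delta_b)(X)$, and the two key steps --- using the transitivity of the stabilizer $S_{\mathbf{C}}^X$ on ballots with a fixed value of $\ev_X$ to show that $F(\delta_b)(X)$ depends only on $b(X)$, then applying the transposition $(X\,Y)$ to show the resulting weight vector is independent of the candidate --- are exactly the paper's argument. The only cosmetic difference is that the paper defines $\bfw_X(j)$ as an average over the fiber $\lbrace b : b(X)=j \rbrace$ before showing all terms agree, whereas you identify the orbit structure first.
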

			\begin{proof}  		

					 Let $F$ be a neutral CWF.  For $ 1 \leq j \leq m$, let $N_{X,j} = \# \left\{  b \in \mathbf{C}_\lambda : b(X) = j \right\}$ (the number of ballots which place $X$ in $j^{\text{th}}$ place); then $N_{X, j} = N_{Y,j}$ for all $X, Y \in \mathbf{C}$. Define $\mathbf{w}_X \in \mathbb{R}^{\left[ m \right]}$ by 
					\[  \mathbf{w}_X(j) = \frac{1}{N_{X, j}} \left( \sum_{b \in \mathbf{C}_{\lambda}, b(X) = j} F(\delt{b})(X) \right). \]
					 	  For any $\tau \in 	S^X_{\mathbf{C}}$, we have 
			\[ F(\delt{b})(X) = F((\tau^{-1} \tau). \delt{b})(X) = F(\tau .\delt{b})(\tau. X) = F(\tau. \delt{b})(X). \]
			If $b'$ is any other ballot such that $b'(X) = b(X)$, we can find some $\tau \in S_\mathbf{C}^X$ such that $\tau b = b'$, and by the above comment we have $F(\delt{b})(X) = F(\delt{b'})(X)$.  Therefore $ \mathbf{w}_X(j) = F(\delt{b})(X)$, where $b$ is any ballot satisfying $b(X) = j$.
					
					Now let $Y$ be any other candidate, let $\sigma = (X \; Y) \in S_{\mathbf{C}}$, and $1 \leq j \leq m$.  Then
\begin{eqnarray*}
\mathbf{w}_Y (j) &=&  F(\delt{b})(Y)  \\
				&=&   F((\sigma^{-1} \sigma).\delt{b})(Y) \\
				&=&   F( \sigma.\delt{b})(X) \\			
				&=& F(\delt{b'})(X) \\
				&=& \mathbf{w}_X(j).
\end{eqnarray*}				
				
				So we can define $\mathbf{w} = \mathbf{w}_X$ for any candidate $
				X$, and then $ \bfw(b(X)) = F(\delta_b)(X)$ for any $b \in \mathbf{C}_\lambda$ and $X \in \mathbf{C}$.  Linearity of $F$ then gives us $F(\bp)(X) = \displaystyle{\sum \bp(b) (\bfw \circ \ev_X)(b)}$; hence, $F$ is the positional voting method with weight $\bfw$.  
				\end{proof}
				
\section{Results concerning voting criteria}\label{criteriaresults}
There are certain criteria or `fairness standards' one would hope any reasonable voting method should satisfy.  For an introductory discussion of these criteria, see \cite{robinson}.  For example, if candidate $X$ is ranked in first place for a majority of ballots, one might expect $X$ to be the unique winner of the election.  This condition is called the \textit{majority criterion}.  As compelling as it may seem, there are some prominent voting methods, such as the Borda count, that do not satisfy this condition.  A seminal result of economist Kenneth Arrow, proven in 1950, demonstrates that \textit{no} social welfare function can satisfy three particular prominent and compelling criteria -  anonymity, Pareto efficiency, and independence of irrelevant alternatives.  The first of these criteria, anonymity, is simply the condition that a voting method should not favor one member of the electorate over another.   All positional voting methods are, by design, anonymous, since the input data consists of a profile (which can be thought of as a ballot collection that has already been tabulated).  The remaining two criteria (Pareto efficiency and independence of irrelevant alternatives), along with several more, will be discussed below.  An introductory discussion of these and other voting criteria can be found in \cite{robinson}.  A comprehensive discussion of Arrow's Theorem, first published in \cite{arrow}, can be found in \cite{saari2}.  It is beyond our scope to discuss this Theorem any further.  Our more modest goal is to demonstrate the efficacy of the present framework to prove some useful facts concerning cardinal welfare functions and some voting criteria.  

\subsection{Independence of irrelevant alternatives}

Let $X$ and $Y$ be two candidates in an election with three or more candidates, and let $F$ be a CWF applied to a profile $\bp$.  Now suppose the voters are provided the opportunity of a `revote' to submit another ballot $\bq$ (perhaps some incriminating information was revealed about a candidate), except that no voter changed her mind with respect to the ordinal ranking of $X$ and $Y$; that is, if a voter ranked $X$ above $Y$ for $\bp$, she did the same for $\bq$, and vice versa.  For example, two voters may have voted, and then revoted, as follows: 
\begin{center}
  \[ \xymatrixcolsep{3pc}\xymatrix{
 {\begin{tabular}{| c  c  c |} 
\cline{1-2}
 \textbf{Y} & B & \multicolumn{1}{| c}{ } \\ 
 \cline{1-3}  
D & \textbf{X} & C \\ \cline{1-3}
  \multicolumn{1}{| c |}{ A} \\ \cline{1-1}
\end{tabular}}  \ar^{revote}[r] 
&   
{\begin{tabular}{| c  c  c |} \cline{1-2}
\textbf{Y} & A & \multicolumn{1}{| c}{ } \\ \cline{1-3}  
B & C & D \\ \cline{1-3}
\multicolumn{1}{| c |}{\textbf{X}}  \\ \cline{1-1}
\end{tabular}}
 } \]
 \[ \xymatrixcolsep{3pc}\xymatrix{
 {\begin{tabular}{| c  c  c |} 
 \cline{1-2}
 B & D & \multicolumn{1}{| c}{ } \\ 
 \cline{1-3}  
C & \textbf{X} & \textbf{Y} \\ 
\cline{1-3}
\multicolumn{1}{| c |}{A}
\\ \cline{1-1}
\end{tabular}}  \ar^{revote}[r] 
&   
{\begin{tabular}{| c  c  c |} \cline{1-2}
\textbf{X} & \textbf{Y} & \multicolumn{1}{| c}{ } \\ \cline{1-3}  
A & B & D \\ \cline{1-3}
\multicolumn{1}{| c |}{C} \\ \cline{1-1}
\end{tabular}}
 } \]
\end{center}

If we assume the individual ordinal preferences of $X$ and $Y$ remain unchanged for all ballots in the revote $\bp \rightarrow \bq$, we might expect a reasonable CWF to follow suit and leave the ordinal ranking of $X$ and $Y$ unchanged when evaluated at $\bp$ and $\bq$ ($F(\bp)(X) < F(\bp)(Y)$ if and only if $F(\bq)(X) < F(\bq)(Y)$, and $F(\bp)(X)  = F(\bp)(Y)$ if and only if $F(\bq)(X) = F(\bq)(Y)$).  In the example given directly above, the candidates A, B, C, and D are irrelevant to the ordinal ranking of $X$ and $Y$.  This expectation (stated precisely below) is therefore usually known as the criterion of \textit{independence of irrelevant alternatives} (henceforth abbreviated as IIA).  The criterion requires that the aggregate ordinal ranking of two candidates $X, Y$ remains invariant under any change in profile for which the individual ordinal rankings of $X$ and $Y$ remain the same, although the definitions found in the literature vary depending on context and application.  For example, the authors of \cite{robinson}, when discussing social choice functions on fully ranked ($\lambda = (1, \cdots, 1)$) profiles, define IIA as follows (although some terminology and notation has been adapted to match our own here):

\begin{center}
\parbox{4 in} { \textit{Suppose $X$ and $Y$ are two candidates, and $\bp$, $\bq$ are two profiles such that no voter changes their preference with respect to $X$ and $Y$ (so if $X$ is preferred to $Y$ for some ballot in $\bp$, then $X$ is preferred to $Y$ for the `revised' ballot in $\bq$, and vice versa).  If $X$ but not $Y$ is declared a winner when $F$ is evaluated at $\bp$, then $Y$ should not be declared a winner when $F$ is evaluated at $\bq$. } }	
\end{center}
\mbox{}\\
The definition found in 	\cite{saari2}, where ballots are still fully ranked, but social welfare functions are considered, is more general.   Here the author states that a social welfare function $F$ satisfies IIA if the following conditions holds:
\mbox{}\\
\begin{center}
\parbox{4 in} { 
\textit{Let $X$ and $Y$ be two candidates.  Suppose $\bp_1$ and $\bp_2$ are any two profiles where each voter's $\left\{ X \, Y \right\}$ ranking in $\bp_1$ agrees with the voter's $\left\{ X \, Y \right\}$ ranking in $\bp_2$.  Then the group's $\left\{ X \, Y \right\}$ ranking for $F(\bp_1)$ and $F(\bp_2)$ agree.}
}	
\end{center}
\mbox{}\\

We will recast this criterion in the current framework.  Our definition of the criterion will \textit{not} require each voter to preserve their ordinal ranking of $X$ and $Y$; indeed, as our profiles are already tabulated, we are prevented from even formulating such a condition.  Instead, we will impose the weaker implied condition that the \textit{number} of ballots ranking $X$ above $Y$ remains unchanged (and similarly for ballots ranking $Y$ above $X$).  There is no cost for this modification: all of our CWF's are, by design, anonymous, and so it can be shown that any CWF $F$ will satisfy the `traditional' IIA criterion if and only if it satisfies this weakened IIA criterion.   

Of course, traditionally profiles are collections of ballots, and there is no notion of a `negative vote'.  So at first we define this equivalence only on nonnegative profiles.   
 We will say that two nonnegative profiles $\bp$, $\bq$ are $X,Y$--equivalent, written $\bp \sim_{X,Y} \bq$, if the number of ballots ranking $X$ above $Y$ in $\bp$ is equal to the number of ballots ranking $X$ above $Y$ in $\bq$, and similarly for ballots ranking $Y$ above $X$.   We note that this notion of equivalence allows any voters with no preference between $X$ and $Y$ to enter or leave the electorate.  For example, we have 
\\
\\
$ \displaystyle{ 1\; {\begin{tabular}{| c  c  c |} 
\cline{1-1}
\multicolumn{1}{| c |}{\bf{X}}  \\ 
\cline{1-2}
A & B & \multicolumn{1}{| c}{ } \\ \cline{1-3}  
\bf{Y} & C & D \\ \cline{1-3}
\end{tabular}} 
 + 4 \;
{\begin{tabular}{| c  c  c |} 
\cline{1-1}
\multicolumn{1}{| c |}{A}  \\ 
\cline{1-2}
C & \bf{X} & \multicolumn{1}{| c}{ } \\ \cline{1-3}  
\bf{Y} & C & D \\ \cline{1-3}
\end{tabular}} 
 +  3 \;{\begin{tabular}{| c  c  c |} 
\cline{1-1}
\multicolumn{1}{| c |}{A}  \\ 
\cline{1-2}
\bf{X} & \bf{Y} & \multicolumn{1}{| c}{ } \\ \cline{1-3}  
B & C & D \\ \cline{1-3}
\end{tabular}} +  7 \;{\begin{tabular}{| c  c  c |} 
\cline{1-1}
\multicolumn{1}{| c |}{A}  \\ 
\cline{1-2}
\bf{Y} & B & \multicolumn{1}{| c}{ } \\ \cline{1-3}  
\bf{X} & C & D \\ \cline{1-3}
\end{tabular}} }$
\\
\\
$ \displaystyle{ \hspace{1 in} \; \; \; \; \; \; \sim_{X,Y} \; \; \; 
5 \;  {\begin{tabular}{| c  c  c |} 
\cline{1-1}
\multicolumn{1}{| c |}{A}  \\ 
\cline{1-2}
\bf{X} & B & \multicolumn{1}{| c}{ } \\ \cline{1-3}  
\bf{Y} & C & D \\ \cline{1-3}
\end{tabular}}  
 + 15 \;{\begin{tabular}{| c  c  c |} 
\cline{1-1}
\multicolumn{1}{| c |}{A}  \\ 
\cline{1-2}
B & C & \multicolumn{1}{| c}{ } \\ \cline{1-3}  
\bf{X} & \bf{Y} & D \\ \cline{1-3}
\end{tabular}}
 +  7\;{\begin{tabular}{| c  c  c |} 
\cline{1-1}
\multicolumn{1}{| c |}{\bf{Y}}  \\ 
\cline{1-2}
A & C & \multicolumn{1}{| c}{ } \\ \cline{1-3}  
B & D & \bf{X} \\ \cline{1-3}
\end{tabular}}.}$
\\
\\
Now we recast this condition in terms of profile vectors. 
\begin{prop}
	Two nonnegative profiles $\bp$, $\bq$ are $X,Y$--equivalent if and only if $\bp - \bq$ is orthogonal to the plane spanned by $\axy$ and $\ayx$.   
\end{prop}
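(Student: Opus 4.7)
The plan is to unpack both sides of the equivalence into statements about inner products and observe they match directly. The key translation step is that, by definition, $\bp \cdot \axy = \sum_{b(X) < b(Y)} \bp(b)$, and when $\bp$ is a nonnegative profile each $\bp(b)$ is literally the number of voters submitting ballot $b$. So $\bp \cdot \axy$ is precisely the number of ballots in $\bp$ that rank $X$ above $Y$, and similarly $\bp \cdot \ayx$ counts those ranking $Y$ above $X$.

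With that identification, I would proceed as follows. First, I would rewrite the defining condition $\bp \sim_{X,Y} \bq$ as the pair of scalar equations
\[
\bp \cdot \axy = \bq \cdot \axy, \qquad \bp \cdot \ayx = \bq \cdot \ayx.
\]
Rearranging, this becomes $(\bp - \bq) \cdot \axy = 0$ and $(\bp - \bq) \cdot \ayx = 0$. Since a vector is orthogonal to a two-dimensional subspace iff it is orthogonal to each of two spanning vectors, this is equivalent to $\bp - \bq \perp \mathrm{span}\{\axy, \ayx\}$, which is the desired conclusion. (Strictly speaking, if $\axy$ and $\ayx$ happened to be parallel, the span would be a line, but this is a vacuous distinction: orthogonality to each of them is equivalent to orthogonality to their span regardless.)

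The only subtle point—so this is where I would be most careful in the write-up—is why the condition used in the definition (equality of ballot counts on each side) is equivalent to the inner-product equalities above. This depends on $\bp$ and $\bq$ being nonnegative, so that $\bp \cdot \axy$ is genuinely a count of voters; if we allowed negative coefficients, the correspondence between $\bp(b)$ and a headcount would break down. I would emphasize the role of nonnegativity at this step, and note that this is the whole reason the proposition is stated for nonnegative profiles. Otherwise the argument is a direct two-line computation; no deeper machinery from Propositions~\ref{rproperties} or \ref{vproperties} is needed.
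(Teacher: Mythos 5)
Your proposal is correct and follows essentially the same route as the paper: both identify $\bp \cdot \axy = \sum_{b(X)<b(Y)} \bp(b)$ with the count of ballots ranking $X$ over $Y$ (the paper phrases this as $\hgt(\pi_{X>Y}(\bp))$), using nonnegativity in exactly the same way, and then translate the two count equalities into orthogonality of $\bp - \bq$ to the spanning vectors. No substantive difference.
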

\begin{proof}
Since $\bp$, $\bq$ are nonnegative, the condition that $\bp \sim_{X,Y} \bq$ is that the number of voters submitting ballots ranking $X$ over $Y$ remains unchanged, and similarly for $Y$ over $X$; i.e., we have 
 \[ \axy\cdot \bp = \text{ht} \left( \pi_{X>Y} (\bp) \right) = \text{ht} \left( \pi_{X>Y} (\bq) \right) = \axy \cdot \bq,  \] 
 and similarly for $\ayx$.  
 \end{proof}
 \noindent We use this proposition to extend $X,Y$--equivalence to all profiles, by defining $\bp   \sim_{X,Y} \bq$ if $\bp - \bq$ is orthogonal to $\sp_{\mathbb{R}} \left\{ \axy, \ayx \right\}$.  For a profile $\bp$, we will denote by $\left[ \bp \right]_{X,Y}$ the equivalence class of all profiles $\bq$ with $\bq \sim_{X,Y} \bp$. 

	For a positional voting method $\bw$, we now present our definition of the IIA criterion.  To motivate the definition, we clarify what we would desire: suppose, after a profile $\bp$ is submitted, a `revote' $\bq$ is provided.  If the number of ballots ranking $X$ above $Y$ remains unchanged in the revote, and similarly for $Y$ above $X$, we should hope that the aggregate ordinal ranking (under $\bw$) for $X$ and $Y$ remains unchanged.  At the same time, the number of ballots ranking $X$ and $Y$ equally should have no effect on this outcome.   The requirement that $\bw$ preserves the ordinal ranking on $X$ and $Y$ is equivalent to the condition 
	\[ \vxy \cdot \bp > 0 \; \; \Rightarrow  \; \; \vxy \cdot \bq > 0. \]
	Therefore we present the following definition for the IIA criterion, adapted to arbitrary compositions $\lambda$:
	\begin{defn}
		The positional voting method $\bw$ satisfies the IIA criterion if, for any profiles $\bp, \bq \in \prof$ with $ \bp \sim_{X,Y} \bq$, 
		\[ \bp \cdot \vxy > 0 \; \; \Rightarrow  \; \; \bq \cdot \vxy  > 0. \]
	\end{defn}
	
The next lemma concerns rotations in $\prof$ about $\rxy$ acting on the equivalence classes $\left[ \bp \right]_{X,Y}$.  Modulo the subspace $\mathbb{R}\mathbf{1}$, these equivalence classes are preserved by rotations about $\rxy$:
	\begin{lemma}\label{rotationlemma}
		If $T$ is a rotation about $\rxy$ and $\bp \in \prof$, then there exists $\bq \in \mathbb{R}\mathbf{1}$  such that 	$T(\bp) + \bq \in \left[ \bp \right]_{X,Y}$.\end{lemma}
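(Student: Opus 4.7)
The plan is to reduce membership in $\left[ \bp \right]_{X,Y}$ to orthogonality against a convenient basis of $\sp_{\mathbb{R}}\{\axy,\ayx\}$, and then choose $\bq=c\mathbf{1}$ to kill the one nontrivial component. First I would note that $\axy$ and $\ayx$ are supported on disjoint subsets of $\mathbf{C}_\lambda$, so they are orthogonal; moreover, the bijection $b \mapsto (X\ Y).b$ shows $|\axy| = |\ayx|$. Consequently $\{\rxy,\; \axy+\ayx\}$ is an orthogonal basis of $\sp_{\mathbb{R}}\{\axy,\ayx\}$, and the condition $T(\bp)+\bq \in \left[\bp\right]_{X,Y}$ is equivalent to the two scalar equations
\[
\bigl(T(\bp)+\bq-\bp\bigr)\cdot \rxy = 0, \qquad \bigl(T(\bp)+\bq-\bp\bigr)\cdot(\axy+\ayx) = 0.
\]

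Next I would check that the first equation is automatic when $\bq \in \mathbb{R}\mathbf{1}$. Since $T\in SO(k)^{\rxy}$ we have $T^{-1}(\rxy)=\rxy$, and since $T$ is an isometry,
\[
T(\bp)\cdot\rxy = \bp\cdot T^{-1}(\rxy) = \bp\cdot\rxy.
\]
Combined with $\mathbf{1}\cdot\rxy=0$ from Proposition \ref{rproperties}(1), this gives $(T(\bp)+\bq-\bp)\cdot\rxy = 0$ for every $\bq\in\mathbb{R}\mathbf{1}$.

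Finally I would turn the second equation into a linear equation in $c$, writing $\bq=c\mathbf{1}$:
\[
c\,\bigl(\mathbf{1}\cdot(\axy+\ayx)\bigr) = \bigl(\bp-T(\bp)\bigr)\cdot(\axy+\ayx).
\]
The potential obstacle is that the coefficient of $c$ might vanish, but $\mathbf{1}\cdot(\axy+\ayx)$ simply counts (twice) the ballots $b$ with $b(X)\neq b(Y)$, so it is strictly positive whenever $|\lambda|\geq 2$; the degenerate case $|\lambda|=1$ is handled separately, since then $\axy=\ayx=0$ and every profile lies in $[\bp]_{X,Y}$. Solving for $c$ produces the required $\bq \in \mathbb{R}\mathbf{1}$, completing the proof. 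I expect the only subtlety to be the recognition that $\sp\{\axy,\ayx\}$ splits so cleanly into a $T$-fixed direction along $\rxy$ and a direction with nonzero component along $\mathbf{1}$; once that split is in hand, the rest is a one-variable linear solve.
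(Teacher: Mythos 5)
Your proof is correct and follows essentially the same route as the paper's: both arguments rest on the facts that $T$ fixes $\rxy$ and preserves the inner product (so the $\rxy$-component of $\bp$ is untouched) and that $\mathbf{1}$ has nonzero inner product with $\axy$ (so a suitable multiple of $\mathbf{1}$ restores the remaining component of $\operatorname{span}\{\axy,\ayx\}$); your choice of $c$ produces exactly the paper's $\bq$. The differences are cosmetic --- you reorganize the span via the orthogonal pair $\rxy$, $\axy+\ayx$ where the paper matches the $\axy$- and $\ayx$-inner products directly --- and you additionally verify that the denominator is nonzero (handling $|\lambda|=1$ separately), a point the paper leaves implicit.
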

	\begin{proof} 
		Let $\bp \in \prof$ and $T$ be any rotation of $\bp$ about $\rxy$, and define 
		\[ \bq = \left( \frac{\left(  \bp -T(\bp)\right) \cdot \axy}{\mathbf{1} \cdot \axy}  \right) \mathbf{1}.\]
		Then $\bq \cdot \axy = (\bp - T(\bp)) \cdot \axy$; i.e., $\left( T(\bp) + \bq \right) \cdot \axy = \bp \cdot \axy$.

		Since $T$ preserves the inner product and $T(\rxy) = \rxy$, we have $\bp \cdot \rxy = T(\bp)\cdot T(\rxy) = T(\bp)\cdot \rxy$; hence, $\left( T(\bp) - \bp\right) \cdot \axy = \left( T(\bp) - \bp\right) \cdot \ayx$.  Therefore
		\begin{eqnarray*}
		\bq \cdot \ayx &=& \bq \cdot \axy  \; \; \; \; \; \; \; \; \; \;  (\text{ Proposition \ref{rproperties} (3)})\\
		&=& (\bp - T(\bp)) \cdot \axy \\
		&=& (\bp - T(\bp)) \cdot \ayx,
		\end{eqnarray*} 
		and so $\left( T(\bp) + \bq \right) \cdot \ayx = \bp \cdot \ayx$.
		\end{proof}
\noindent We are now prepared to restate the IIA criterion as a purely geometric one.  Recall that a rotation about a vector $\mathbf{r} \in \mathbb{R}^k$ is an element of $SO(k)^{\mathbf{r}}$.   
\begin{prop}\label{IIAequiv}
	The positional voting method $\bw$ satisfies IIA if and only if $SO(k)^{\rxy}$  preserves $(\vxy)_+$.
\end{prop}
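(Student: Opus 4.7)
My plan is to prove both directions by directly applying the earlier lemmas, with the rotation lemma handling one direction and Proposition \ref{rotationproperties}(3),(4) handling the other.

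For the forward direction (IIA implies $SO(k)^{\rxy}$ preserves $(\vxy)_+$), I would take $T \in SO(k)^{\rxy}$ and $\bp \in (\vxy)_+$, and aim to show $T(\bp) \in (\vxy)_+$. By Lemma \ref{rotationlemma}, there exists $\bq \in \mathbb{R}\mathbf{1}$ with $T(\bp) + \bq \in [\bp]_{X,Y}$. Since $\bp \cdot \vxy > 0$, IIA applied to the pair $\bp, T(\bp) + \bq$ yields $(T(\bp) + \bq) \cdot \vxy > 0$. By Proposition \ref{vproperties}(1), $\mathbf{1} \perp \vxy$, so $\bq \cdot \vxy = 0$, leaving $T(\bp) \cdot \vxy > 0$ as desired.

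For the reverse direction, I would argue that preservation of $(\vxy)_+$ by $SO(k)^{\rxy}$ forces $\vxy$ to be parallel to $\rxy$, which trivializes IIA. Concretely, for each $T \in SO(k)^{\rxy}$ we have $T((\vxy)_+) \subseteq (\vxy)_+$, so by Proposition \ref{rotationproperties}(3), $T(\vxy) = \vxy$. Hence $\vxy$ is fixed by every element of the isotropy subgroup $SO(k)^{\rxy}$, and Proposition \ref{rotationproperties}(4) then forces $\vxy \in \mathbb{R}\rxy$. Writing $\vxy = c\,\rxy$ and using that $\bp \sim_{X,Y} \bq$ implies $(\bp - \bq) \perp \axy$ and $(\bp - \bq) \perp \ayx$, and hence $(\bp - \bq) \cdot \rxy = 0$, I obtain $\bp \cdot \vxy = \bq \cdot \vxy$. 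In particular, $\bp \cdot \vxy > 0 \iff \bq \cdot \vxy > 0$, which is exactly IIA.

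I do not anticipate any real technical obstacle here, since the required machinery is already in place and each step is a direct invocation. The conceptually interesting point — and the part worth highlighting — is that the geometric preservation condition is so rigid that it collapses $\vxy$ onto the line $\mathbb{R}\rxy$; combined with Proposition \ref{vproperties}(3), this foreshadows that IIA will essentially only hold in the degenerate case $|\lambda|=2$, which aligns with the paper's stated goal of showing that linear CWFs almost never satisfy IIA.
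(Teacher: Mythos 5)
Your proposal is correct and follows essentially the same route as the paper's own proof: the forward direction via Lemma \ref{rotationlemma} together with $\mathbf{1} \perp \vxy$, and the reverse direction by using Proposition \ref{rotationproperties}(3) and (4) to force $\vxy \in \mathbb{R}\rxy$ and then concluding $\bp \cdot \vxy = \bq \cdot \vxy$ for $X,Y$--equivalent profiles. No gaps; your closing observation about the rigidity of the preservation condition matches the way the paper deploys this proposition in the subsequent theorem.
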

\begin{proof}
	For convenience we abbreviate $\vxy = \bv$, and similarly for $\rxy$.  We will use the fact that $\bv \perp \mathbf{1}$ (Proposition \ref{vproperties}).  First assume $\bw$ satisfies IIA.  Let $T \in SO(k)^\br$, and assume $\bp \cdot \bv > 0$.  Then by Lemma \ref{rotationlemma}, $T(\bp) + \bq \sim_{X,Y} \bp$ for some $\bq \in \mathbb{R}\mathbf{1}$; hence, $T(\bp)\cdot \bv = (T(\bp) + \bq) \cdot \bv > 0$.

	Conversely, suppose rotation about $\br$ preserves $(\bv)_+$.  Then for any $T \in SO(k)^\br$ we have $T(\bv)_+ \subseteq (\bv)_+$.  By Lemma \ref{rotationproperties} (3), we have $T(\bv) = \bv$.  Since the only fixed points of $SO(k)^\br$ are elements of $\mathbb{R} \br$ (Lemma \ref{rotationproperties} (4)), we must have $\bv = c \, \br$ for some scalar $c$.  Now suppose $\bp \cdot \bv > 0$.  Then for any $\bq \in \left[ \bp \right]_{XY}$,  we have $(\bp - \bq) \cdot \br = 0$; hence, $\bq \cdot \bv = c\, \bq \cdot \br = c\,  \bp \cdot \br  = \bp \cdot \bv   > 0$.  
\end{proof}

\begin{thm}
	A nontrivial linear neutral CWF $F$ on a profile space $\prof$ satisfies IIA if and only if $|\lambda| = 2$.  
\end{thm}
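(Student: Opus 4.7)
The plan is to reduce this to the propositions already established: Theorem \ref{linearscf} lets me assume $F = \bw$ is a positional voting method, and Proposition \ref{IIAequiv} translates IIA into the statement that $SO(k)^{\rxy}$ preserves $(\vxy)_+$ for every pair $X, Y$. So the theorem becomes a question about when $\vxy$ and $\rxy$ are suitably aligned, which is precisely the content of Proposition \ref{vproperties}.

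For the $(\Leftarrow)$ direction, I would assume $|\lambda| = 2$ and pick any two candidates $X, Y$. By Proposition \ref{vproperties}(2), $\vxy \parallel \rxy$, so by Proposition \ref{halfspaces} we have $(\vxy)_+ = \pm (\rxy)_+$. Any $T \in SO(k)^{\rxy}$ fixes $\rxy$ and preserves the inner product, so $T(\bp) \cdot \rxy = \bp \cdot \rxy$ for all $\bp$; thus $SO(k)^{\rxy}$ preserves $(\rxy)_+$ and therefore $(\vxy)_+$. Proposition \ref{IIAequiv} then yields IIA. This direction is routine and does not use nontriviality.

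For the $(\Rightarrow)$ direction, I would assume IIA holds and $F$ is nontrivial, and derive $|\lambda| = 2$. Nontriviality means $F(\bp) \notin \mathbb{R}\mathbf{1}$ for some $\bp$, which implies $\vect{X} \neq \vect{Y}$ for some pair $X, Y$, i.e., $\vxy \neq 0$ for some pair. Fix such a pair. By Proposition \ref{IIAequiv}, $SO(k)^{\rxy}$ preserves $(\vxy)_+$; then running the argument from the second half of the proof of Proposition \ref{IIAequiv} (applying Proposition \ref{rotationproperties}(3) to get $T(\vxy) = \vxy$ for all $T \in SO(k)^{\rxy}$, then Proposition \ref{rotationproperties}(4) to conclude $\vxy$ lies in $\mathbb{R}\rxy$) gives $\vxy \parallel \rxy$. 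Since $\vxy \neq 0$, Proposition \ref{vproperties}(3) forces $|\lambda| \leq 2$; and $|\lambda| = 1$ would make $\prof$ one-dimensional with a trivial voting outcome, so $|\lambda| = 2$.

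The main subtlety is recognizing how nontriviality is used: without it, the trivial CWF (with all $\vxy = 0$) would vacuously satisfy IIA for every $\lambda$, and the theorem would be false. The argument above isolates a single pair $(X, Y)$ with $\vxy \neq 0$ and runs the Proposition \ref{IIAequiv} machinery on just that pair to extract the constraint on $|\lambda|$. Everything else is a direct invocation of already-proven results, so I expect no serious technical obstacle beyond stating the chain of implications cleanly.
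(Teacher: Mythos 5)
Your proof is correct and takes essentially the same route as the paper's: reduce to a positional method via Theorem \ref{linearscf}, convert IIA into the alignment condition $\vxy \parallel \rxy$ via Proposition \ref{IIAequiv} and the rotation properties, and then apply Proposition \ref{vproperties} to pin down $|\lambda|$. Your version is slightly more explicit than the paper's about where nontriviality enters (producing a pair with $\vxy \neq 0$) and about excluding the degenerate case $|\lambda| = 1$, but these are refinements of the same argument rather than a different approach.
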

\begin{proof}
	Let $F$ be a nontrivial linear neutral CWF.  By Theorem \ref{linearscf}, $F$ is a positional voting method. Rotation about $\rxy$ preserves $(\vxy)_+$ if and only if $\vxy = c \; \rxy$ for some scalar $c$.  If $| \lambda | > 2$, Proposition \ref{vproperties} guarantees $\vxy = 0$; hence, $F$ is trivial.  If $| \lambda |  = 2$, Proposition \ref{vproperties} gives us $\vxy \parallel \rxy$, and by Proposition \ref{IIAequiv} $F$ satisfies IIA.
\end{proof}
	Let a group $G$ act upon $\prof$.  We will say that a profile $\bp \in \prof$ is \textit{IIA--unstable with respect to $G$} if there exists some $g \in G$ such that $\bp$ and $g.\bp$ provide an IIA--violation.  The result above demonstrates that there exist profiles $\bp$ which are IIA--unstable with respect to the action of the special orthogonal group on $\prof$.  Of course, if $\bp$ is IIA--unstable with respect to $G$ and $G$ is a subgroup of $H$, then $\bp$ is also $IIA$--unstable with respect to $H$.  The converse, however, should be false, and this motivates an interesting question - if $\bp$ is IIA--unstable with respect to $G$ and $G' \leq G$, does (or when does) $\bp$ remain IIA--unstable with respect to $G'$?
\subsection{Pareto efficiency}

			Classically, a candidate $X$ is preferred \textit{unanimously} to a candidate $Y$ in a (nonnegative) profile $\bp$ if $X$ appears `above' $Y$ for all ballots in $\bp$.  The criterion of Pareto efficiency demands that $X$ defeats $Y$ in an election whenever $X$ is preferred unanimously to $Y$.  To adapt the condition of unanimous preference to an arbitrary profile in a manner consistent with Postulate 1, we will say that a (nonzero) profile $\bp$ \textit{prefers $X$ unanimously to $Y$} if 
			
			\begin{itemize}
				\item $\bp(b) \geq 0$ for any ballot $b$ satisfying $b(X) < b(Y)$, 
				\item $\bp(b) \leq 0$ for any ballot $b$ satisfying $b(X) > b(Y)$, 
				\item the projection of $\bp$ onto $P_{\mathbf{C}, \lambda}^{X>Y}  \oplus P_{\mathbf{C}, \lambda}^{X<Y}$ is nonzero. 
		 	\end{itemize}
		
			 The collection of all profiles which prefer $X$ unanimously to $Y$, which we will denote $\left( \prof \right)_{X \gg Y}$, is then 			
			 \begin{eqnarray*}
			  \left( \prof \right)_{X \gg Y} = \left\{ \sum_{b(X) < b(Y) } r_b \delta_b \; \; +  \sum_{b(X) > b(Y) }  s_b \delta_b \; \; +   \sum_{b(X) = b(Y) }  t_b \delta_b   : \right. \\
			   \left.    r_b \geq 0 , \; s_b \leq 0, \; \sum r_b - \sum s_b > 0 \right\} . 
			  \end{eqnarray*}
			If $\lambda = (1, \ldots, 1)$, then $\left( \prof \right)_{X \gg Y}$ is an orthant of $\prof$. 
				
			Our adapted definition for Pareto efficiency is this: a CWF $F$ is \textit{Pareto efficient} if, for any profile $\bp$ which prefers $X$ unanimously to $Y$, we have $F(\bp)(X) > F(\bp)(Y)$.  This equality holds if and only if $\bp \cdot \vxy > 0$. So we see that $F$ is Pareto efficient if and only if $\left( \prof \right)_{X \gg Y}$ lies in the half--space $\left( \vxy \right)_+$.  
			
		\begin{prop}\label{paretocriterion}
		The positional voting method $\bw$ is Pareto efficient if and only if $\bfw$ is strictly decreasing; i.e., $\bfw(i) > \bfw(i+1)$.	
		\end{prop}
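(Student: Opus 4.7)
The plan is a direct two-direction argument, with the forward direction coming essentially from the sign pattern of $\vxy$ on ballots, and the reverse direction by constructing a single-ballot counterexample.

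First I would unpack the key identity. By definition $\vxy(b) = \vect{X}(b) - \vect{Y}(b) = \mathbf{w}(b(X)) - \mathbf{w}(b(Y))$, so the sign of $\vxy(b)$ is controlled by the monotonicity of $\mathbf{w}$ together with the relative order of $b(X)$ and $b(Y)$. In particular, if $\mathbf{w}$ is strictly decreasing, then $\vxy(b)>0$ whenever $b(X)<b(Y)$, $\vxy(b)<0$ whenever $b(X)>b(Y)$, and $\vxy(b)=0$ whenever $b(X)=b(Y)$.

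For the forward direction, assume $\mathbf{w}(i)>\mathbf{w}(i+1)$ for every $i$, and take any $\bp\in(P_{\mathbf{C},\lambda})_{X\gg Y}$. I would evaluate
\[ \bp\cdot\vxy \;=\; \sum_{b(X)<b(Y)}\bp(b)\vxy(b)\;+\;\sum_{b(X)>b(Y)}\bp(b)\vxy(b)\;+\;\sum_{b(X)=b(Y)}\bp(b)\vxy(b). \]
The third sum vanishes, and under the three bulleted sign conditions defining $(P_{\mathbf{C},\lambda})_{X\gg Y}$ together with the sign of $\vxy(b)$ above, every surviving term is $\geq 0$. The third bullet (that the projection onto $P_{\mathbf{C},\lambda}^{X>Y}\oplus P_{\mathbf{C},\lambda}^{X<Y}$ is nonzero) forces at least one of those terms to be strictly positive, so $\bp\cdot\vxy>0$; this is the desired Pareto-efficiency condition $(P_{\mathbf{C},\lambda})_{X\gg Y}\subseteq(\vxy)_+$.

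For the converse, I would prove the contrapositive: if $\mathbf{w}$ is not strictly decreasing, exhibit a single offending profile. Choose $i$ with $\mathbf{w}(i)\leq\mathbf{w}(i+1)$. Since $\lambda_i,\lambda_{i+1}\geq 1$, there is a tabloid $b\in\mathbf{C}_\lambda$ with $b(X)=i$ and $b(Y)=i+1$; take $\bp=\delta_b$. Then $\bp$ satisfies all three conditions defining $(P_{\mathbf{C},\lambda})_{X\gg Y}$ (only the first sum contributes, with coefficient $1$), yet $\bp\cdot\vxy=\mathbf{w}(i)-\mathbf{w}(i+1)\leq 0$, so $\bw$ fails Pareto efficiency. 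The only mild subtlety to watch is the existence of the ballot $b$, which is guaranteed by $\lambda_i,\lambda_{i+1}>0$; there is no genuine obstacle — the whole argument is a bookkeeping exercise in the sign pattern of $\vxy$ together with one well-chosen delta-function profile.
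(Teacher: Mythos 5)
Your proof is correct and follows essentially the same route as the paper's: both directions rest on the identity $\delta_b \cdot \vxy = \bfw(b(X)) - \bfw(b(Y))$, with the forward direction a sign bookkeeping over the three classes of ballots and the converse obtained by testing a single ballot with $b(X)=i$, $b(Y)=i+1$. Your version is slightly more explicit than the paper's (which reduces to the generators $\delta_b$ and $-\delta_b$ and leaves the converse's choice of ballot implicit), but there is no substantive difference in approach.
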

		\begin{proof}
			First assume $\bfw$ is strictly decreasing.  We must show that $\left( \prof \right)_{X \gg Y} \subseteq \left( \vxy \right)_+$, for which it is necessary and sufficient to show the containment 
			\[ \displaystyle{ \left\{ \delta_b \right\}_{ b(X) < b(Y)} \cup \left\{ - \delta_b \right\}_{b(X) > b(Y)}} \subseteq \left( \vxy \right)_+.\]  If $b(X) < b(Y)$, then  
			\begin{equation}\label{paretoineq1} 0 < \bfw(b(X)) - \bfw(b(Y)) = \delta_b \cdot \vxy.\end{equation}
			If $b(X) > b(Y)$, then 
			\begin{equation}\label{paretoineq2} 0 < \bfw(b(Y))  - \bfw(b(X)) = (-\delta_b) \cdot \vxy.\end{equation} 
			Conversely, if  $\left( \prof \right)_{X \gg Y} \subseteq \left( \vxy \right)_+$, we obtain the inequalities $\delta_b \cdot \vxy > 0$ for $b(X) < b(Y)$, from which $\bfw(i) > \bfw(i+1)$ follows. 
		\end{proof}
		\subsection{The strong majority criterion}
		The classical majority criterion is defined as follows: suppose $\mathbf{p}$ is a nonnegative profile in $\prof$ for $\lambda = (1, \ldots, 1)$.   A \textit{majority candidate} is one who receives a majority of first--place votes.  This candidate may not exist, but if they do they are unique.  A social choice function satisfies \textit{the majority criterion} if the majority candidate, when she exists, is the unique winner of the election.  There are some technical obstructions to adapting this definition to the present framework, notably due to the existence of negative coefficients in an arbitrary profile.  Here we discuss a variation of this criterion, which can be applied to arbitrary partitions and profiles.  
\begin{defn}
	A CWF $F$ satisfies the \textit{strong majority criterion} if, whenever a candidate $X$ defeats a candidate $Y$ in a head--to--head race in a profile $\bp$, we have $F(\bp)(X) > F(\bp)(Y)$; i.e., $X$ defeats $Y$ in the election.  
	\end{defn}
For nonnegative profiles, the strong majority criterion clearly implies the classical majority criterion.

By Theorem $\ref{linearscf}$, any CWF $F$ is a positional voting method $\bw$.  Therefore we can recast the above definition as follows. Note that candidate $X$ defeats candidate $Y$ in a head--to--head race if and only if $\bp \cdot \rxy > 0$, and  $X$ defeats $Y$ in the election if and only if $\bp \cdot \vxy > 0$.  So we have the following geometric characterization: 

\begin{prop}
The positional voting method $\bw$ satisfies the strong majority criterion if and only if $( \rxy )_+ \subseteq (\vxy)_+$. 	
\end{prop}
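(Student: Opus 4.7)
The plan is to prove this proposition by a direct translation of the definitions, using the two fundamental geometric relations established earlier. The strong majority criterion is a condition of the form ``$X$ head-to-head beats $Y$ in $\bp$ implies $X$ beats $Y$ under $\bw$,'' and both antecedent and consequent already have clean half-space descriptions.

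First I would unpack each side of the biconditional in terms of an arbitrary profile $\bp \in \prof$. By Postulate 1, the statement ``$X$ defeats $Y$ in a head-to-head race in $\bp$'' is equivalent to $\bp \in (\rxy)_+$, i.e.\ $\bp \cdot \rxy > 0$. By Proposition 1, the statement ``$X$ defeats $Y$ in the election under $\bw$'' is equivalent to $\bp \in (\vxy)_+$, i.e.\ $\bp \cdot \vxy > 0$. Substituting these equivalences into the definition of the strong majority criterion gives: $\bw$ satisfies the strong majority criterion (for the pair $X,Y$) if and only if, for every $\bp \in \prof$, $\bp \in (\rxy)_+ \Rightarrow \bp \in (\vxy)_+$.

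Next I would observe that the quantified implication ``for every $\bp$, $\bp \in (\rxy)_+ \Rightarrow \bp \in (\vxy)_+$'' is, by definition, precisely the set containment $(\rxy)_+ \subseteq (\vxy)_+$. This completes both directions of the biconditional simultaneously, since each equivalence above is an ``if and only if'' at the level of the profile $\bp$.

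I do not expect any real obstacle here; the proof is essentially a direct translation of the voting-theoretic condition into the Euclidean language set up in Section 4. The only thing to be slightly careful about is making clear that the quantification ``for all profiles $\bp$'' on the criterion side matches the set-theoretic containment on the geometric side, and that the argument applies for each fixed ordered pair $(X,Y)$ of candidates. No appeal to linearity of $\bw$, to the vectors $\mathbf{1}$, or to the group actions of Section 3 is needed at this stage; those tools were already absorbed into the fundamental geometric relations that supply both halves of the translation.
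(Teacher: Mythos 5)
Your proof is correct and follows exactly the paper's route: the paper likewise derives the proposition directly from Postulate 1 and Proposition 1, noting that $X$ beats $Y$ head-to-head iff $\bp \cdot \rxy > 0$ and wins under $\bw$ iff $\bp \cdot \vxy > 0$, so the criterion is precisely the containment $(\rxy)_+ \subseteq (\vxy)_+$. No differences worth noting.
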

For $|\lambda| > 2$, this criterion is - like IIA - a terrible one, because it is too strong - it is \textit{never} satisfied.  Of interest is not the result in itself, perhaps, but the geometric characterization and proof.

 \begin{thm}   If $| \lambda | = 2$, then the positional voting method $\bw$ satisfies the strong majority criterion if and only if $\bfw(1) > \bfw(2)$.  If  $| \lambda | > 2$ then no nontrivial positional voting method satisfies strong majority.
\end{thm}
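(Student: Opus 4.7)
The plan is to chain together the geometric reformulation of the strong majority criterion (SMC) just established with the parallelism/non-parallelism dichotomy of Proposition \ref{vproperties}, and then split into the two cases $|\lambda|=2$ and $|\lambda|>2$.

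First, by the preceding proposition $\bw$ satisfies SMC iff $(\rxy)_+ \subseteq (\vxy)_+$ for every ordered pair of distinct candidates $X, Y$. The sharpening of Proposition \ref{halfspaces} recorded immediately after its proof converts this half-space inclusion into the algebraic statement $\rxy \in \mathbb{R}_{>0}\vxy$; equivalently, $\vxy$ is a strictly positive scalar multiple of $\rxy$ for every such pair.

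For the case $|\lambda|=2$, the proof of Proposition \ref{vproperties}(2) actually produces the scalar explicitly, namely $\vxy = (\bfw(1)-\bfw(2))\,\rxy$. Thus $\vxy \in \mathbb{R}_{>0}\rxy$ exactly when $\bfw(1) > \bfw(2)$, which gives the first assertion. (If $\bfw(1) = \bfw(2)$ then $\vxy = 0$ so $(\vxy)_+$ is empty while $(\rxy)_+$ is not, so SMC also fails, consistent with the $\Leftrightarrow$.)

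For $|\lambda| > 2$ and $\bw$ nontrivial, I would first observe that $\vxy \neq 0$ for every pair $X\neq Y$: nontriviality means $\bfw$ is not constant on $[m]$, and since each $\lambda_i \geq 1$, every pair $(i,j)\in[m]^2$ with $i\neq j$ is realized as $(b(X),b(Y))$ for some ballot $b$; choosing $i\neq j$ with $\bfw(i)\neq\bfw(j)$ gives $\vxy(b)=\bfw(i)-\bfw(j)\neq 0$. Proposition \ref{vproperties}(3) then forces $\vxy\nparallel\rxy$, so in particular $\rxy\notin\mathbb{R}_{>0}\vxy$, and SMC fails. The heavy lifting has already been absorbed into Propositions \ref{halfspaces} and \ref{vproperties} and into the geometric reformulation of SMC; the only delicate verification is this nonvanishing of $\vxy$ under nontriviality, which is immediate from the positivity of the parts of $\lambda$.
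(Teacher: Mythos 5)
Your proof is correct and takes essentially the same route as the paper's: both reduce the strong majority criterion to the inclusion $(\rxy)_+ \subseteq (\vxy)_+$, convert that to positive parallelism via Proposition \ref{halfspaces}, and settle the two cases with Proposition \ref{vproperties}. The only differences are cosmetic and in your favor: you argue the $|\lambda|=2$ converse directly from the sign of $\bfw(1)-\bfw(2)$ (including the degenerate case $\vxy = 0$), where the paper detours through Pareto efficiency, and you spell out the verification---only asserted in the paper---that nontriviality forces $\vxy \neq 0$.
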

\begin{proof}
  First assume $|\lambda| = 2$.  In this case, the vectors $\rxy$ and $\vxy$ are parallel - in fact, $\vxy = (\bfw(1) - \bfw(2)) \rxy$ (see proof of Proposition \ref{vproperties}).  If $\bfw(1) > \bfw(2)$, then $\vxy = c \rxy$ for $c > 0$; hence, $(\rxy)_+ = (\vxy)_+$, and Proposition \ref{paretocriterion} guarantees that $\bw$ is Pareto efficient. Conversely, if $\bw$ is Pareto efficient, Proposition \ref{paretocriterion} guarantees that $\bfw(1) - \bfw(2) >~0$.  
	
	Next assume $|\lambda| > 2$, and assume  $F$ is a nontrivial CWF.  By Theorem \ref{linearscf}, $F$ is a positional voting method.  By Proposition \ref{halfspaces}, $( \rxy )_+ \subseteq (\vxy)_+$ only if $\rxy \parallel \vxy$.  By Proposition \ref{vproperties}, this can only happen if $\vxy = 0$, which yields the trivial CWF.
\end{proof}
\noindent By Theorem \ref{linearscf}, all CWF's with $| \lambda | > 2$ violate the strong majority criterion.  
	 
 \subsection{The Condorcet criterion}
	A candidate $X \in \mathbf{C}$ is a Condorcet candidate in a profile if, given any other candidate $Y \in \mathbf{C}$, $X$ defeats $Y$ in a head--to--head race.  A social choice function $F$ satisfies the Condorcet criterion if the Condorcet candidate is guaranteed (unique) victory.  Although we don't have a characterization of all neutral linear CWFs that satisfy the Condorcet criterion, we do provide an interesting geometric characterization of this condition.  Before stating the condition, we need a lemma.  
	 \begin{lemma}\label{uxy}
	 	Let $F$ be a neutral linear CWF, and let $\uxy = F(\rxy)$.  Then $F(\bp)(X) > F(\bp)(Y)$ if and only if $F(\bp) \cdot \uxy > 0$.
	 \end{lemma}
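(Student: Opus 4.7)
The plan is to invoke Theorem \ref{linearscf} to replace $F$ by a positional voting method $\bw$ with weight $\bfw$, then exploit the neutrality of $F$ to pin down the precise form of $\uxy \in \mathbb{R}^{\mathbf{C}}$, after which the claimed biconditional collapses to a one-line inner product computation together with a sign check.

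First I would examine the $S_{\mathbf{C}}$-action on $\rxy$ and transport it through $F$. The transposition $\sigma := (X\,Y)$ interchanges $\mathbf{C}_\lambda^{X>Y}$ with $\mathbf{C}_\lambda^{Y>X}$, so $\sigma.\rxy = -\rxy$, and by neutrality $\sigma.\uxy = -\uxy$. On the other hand, any $\tau \in S_{\mathbf{C}}$ fixing both $X$ and $Y$ merely permutes the ballots within each of $\mathbf{C}_\lambda^{X>Y}$, $\mathbf{C}_\lambda^{Y>X}$, and $\mathbf{C}_\lambda^{X=Y}$, whence $\tau.\rxy = \rxy$ and therefore $\tau.\uxy = \uxy$. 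The orbits of $S_{\mathbf{C}}^{\{X,Y\}}$ on $\mathbf{C}$ are $\{X\}$, $\{Y\}$, and $\mathbf{C}\setminus\{X,Y\}$, so $\uxy$, viewed as a function $\mathbf{C}\to\mathbb{R}$, must be constant on each orbit; combining this with $\sigma.\uxy = -\uxy$ forces $\uxy(Z)=0$ for every $Z \notin \{X,Y\}$ and $\uxy(Y) = -\uxy(X)$. Setting $a := \uxy(X) = \rxy\cdot\vx$ gives $\uxy = a(\delta_X - \delta_Y)$.

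With this structural identification, the inner product collapses to
\[ F(\bp)\cdot \uxy \;=\; a\bigl( F(\bp)(X) - F(\bp)(Y) \bigr), \]
so the biconditional reduces to establishing $a > 0$. Unwinding $a = \rxy\cdot\vx$ and using the $(X\,Y)$-symmetry to pair the two sums yields the closed form
\[ a \;=\; \sum_{b(X)<b(Y)} \bigl( \bfw(b(X)) - \bfw(b(Y)) \bigr) \;=\; \tfrac{1}{2}\,\rxy \cdot \vxy. \]
The main obstacle is verifying strict positivity of this scalar: the neutrality-plus-structure argument for the first two steps is essentially forced, but the sign of $a$ depends genuinely on the shape of $\bfw$. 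The cleanest sufficient condition is that $\bfw$ be strictly decreasing --- equivalently, that $\bw$ be Pareto efficient by Proposition \ref{paretocriterion} --- for then every summand is positive and $a>0$ is immediate. This is the natural regime in which the lemma (and the ensuing discussion of the Condorcet criterion) is intended to operate, and once $a>0$ is in hand the displayed identity above yields both directions of the biconditional at once.
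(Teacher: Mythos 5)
Your argument tracks the paper's proof closely in substance, though you reach the key structural fact by a different route: the paper computes $\uxy(Z) = \rxy\cdot\vect{Z}$ directly and observes that the $(X\,Y)$--symmetry of the ballot sets kills the terms for $Z\notin\{X,Y\}$ and forces $\uxy(Y)=-\uxy(X)$, whereas you derive the same shape $\uxy = a(\delta_X-\delta_Y)$ from equivariance under $S_{\mathbf{C}}^{\{X,Y\}}$ and the relation $(X\,Y).\rxy = -\rxy$. Your orbit argument is arguably cleaner and does not even require passing to a positional representation until the final sign computation. Both arguments then reduce the lemma to the single inequality $a = \uxy(X) > 0$, and your closed form $a = \sum_{b(X)<b(Y)}\bigl(\bfw(b(X))-\bfw(b(Y))\bigr) = \tfrac{1}{2}\,\rxy\cdot\vxy$ agrees with the paper's $\sum_{i<j}\left|\clxy{i}{j}\right|\bigl(\bfw(i)-\bfw(j)\bigr)$.

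The issue you flag at the end is real, and you should know that the paper does not resolve it either: its proof terminates with exactly that double sum and never argues that it is positive. Indeed it cannot be positive for arbitrary $\bfw$ --- if $\bfw$ is increasing the sum is negative (and the biconditional reverses), and if $\bfw$ is constant on $[m]$ the sum is zero (the trivial CWF, for which the statement fails outright since $\uxy=0$). So the lemma as stated needs a hypothesis such as $\bfw$ strictly decreasing (equivalently, Pareto efficiency via Proposition \ref{paretocriterion}), or at minimum nontriviality together with some monotonicity condition guaranteeing $\sum_{i<j}\left|\clxy{i}{j}\right|\bigl(\bfw(i)-\bfw(j)\bigr) > 0$. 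Your proposal is therefore not deficient relative to the paper --- it completes the same reduction and is more honest about the missing step --- but neither your write-up nor the paper's constitutes a complete proof of the lemma as literally stated. If you add the decreasing-weights hypothesis explicitly, your argument closes cleanly.
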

	 
	 \begin{proof}
	 	By Theorem \ref{linearscf}, $F = \bw$ for some $\mathbf{w} \in \mathbb{R}^{\mathbf{C}}$.  To prove the result, it is sufficient to show there exists some $c \in \mathbb{R}_+$ such that 
	 	\[  \uxy(Z)  = \begin{cases}
	 	c, & Z = X \\
	 	-c, &  Z   = Y \\
	 	0, & \text{ otherwise. }
	 	\end{cases} \] 
	 	For $Z \in \mathbf{C}$, we have 
	 	\begin{eqnarray*}
	 		\uxy(Z)  &=& F(\rxy)(Z) =  \bw(\rxy)(Z)\\
	 		&=& \rxy \cdot \bv_{\scriptscriptstyle{Z}}\\
	 		&=& \sum_{b \in \mathbf{C}_\lambda}\rxy (b)( \bfw \circ \ev_Z) (b)\\
	 		&=& \sum_{b \in \mathbf{C}_\lambda^{X>Y}} \mathbf{w}(b(Z)) - \sum_{b \in \mathbf{C}_\lambda^{X<Y}} \mathbf{w}(b(Z)).
	 	\end{eqnarray*}
	 	If $Z \neq X, Y$, then $\uxy(Z) = 0$, and clearly we have $\uxy(X) = -  \uxy(Y)$.  So it only remains to show that $\uxy(X) > 0$. 
	 	For $i,j \in [m]$, define $\clxy{i}{j} = \{b \in \mathbf{C}_{\lambda} \, | \, b(X) = i, b(Y) = j\}$. In other words, $\clxy{i}{j}$ is the set of all ballots that have $X$ ranked in level $i$ and $Y$ ranked in level $j$. Note that $\left|\clxy{i}{j}\right| = \left|\clxy{j}{i}\right|$ for all $i,j$. Now, we may write 
	 	\begin{eqnarray*} 
	 		\uxy(X) & = & \dsum_{i = 1}^{m-1} \dsum_{j = i+1}^{m} \left(\left|\clxy{i}{j}\right|\bfw(i) - \left|\clxy{j}{i}\right|\bfw(j)\right) \\ 
	 		& = & \dsum_{i = 1}^{m-1} \dsum_{j = i+1}^{m} \left|\clxy{i}{j}\right|\left(\bfw(i)-\bfw(j)\right).
	 	\end{eqnarray*}
	 	
	 \end{proof}

	A candidate $X$ is a Condorcet candidate in a profile $\bp$ if and only if, for any $Y \neq X$, $ \bp \cdot \rxy > 0$ (Postulate 1).  If $Y \neq Z$, we have $\br_{\scriptscriptstyle{X>Z}} = (Y \; Z).\rxy$.  Therefore we can write 
		 \[  \bp \cdot \br_{\scriptscriptstyle{X>Z}} = \bp \cdot (Y \; Z).\rxy = (Y \; Z).\bp \cdot \rxy,\] 
		 and we see that the original condition $ \bp \cdot \rxy > 0 $ for all $ Y \neq X$ is equivalent to $S^X_{\mathbf{C}}.\bp \subseteq (\rxy)_+$. 
		  The candidate $X$ is the unique winner if and only if $F(\bp) \cdot \uxy > 0$ for all $Y \neq X$, where $\uxy$ was introduced in Lemma \ref{uxy} above; this condition is equivalent to $S^X_{\mathbf{C}}.F(\bp) \subseteq (\uxy)_+$.  Since $F$ is $S_{\mathbf{C}}$--equivariant, this is equivalent to $F(S^X_{\mathbf{C}}. \bp) \subseteq (\uxy)_+$.  If $F$ is realized as a positional voting method $\bw$, we can alternatively state this `unique winning' condition as $S^X_{\mathbf{C}}.\bp \subseteq (\vxy)_+$.  We summarize these observations in the following proposition. 
		 
	\begin{prop}
		A candidate $X$ is a Condorcet candidate in the profile $\mathbf{p}$ if and only if, given any other candidate $Y \in \mathbf{C}$, the $S_{\mathbf{C}}^{X}$--orbit of $\mathbf{p}$ is contained in $( \rxy )_+$.  The CWF $F$ satisfies the Condorcet criterion if and only if 
				\begin{equation}\label{condorcet1} S^X_{\mathbf{C}}.\bp \subseteq (\rxy)_+ \; \Rightarrow \; F(S^X_{\mathbf{C}}.\bp) \subseteq (F(\rxy))_+. \end{equation}
			If $F$ is realized as a positional voting method $\bw$, then this condition is equivalent to 
			\[  S^X_{\mathbf{C}}.\bp \subseteq (\rxy)_+ \; \Rightarrow \;  S^X_{\mathbf{C}}.\bp \subseteq (\vxy)_+. \] 
	\end{prop}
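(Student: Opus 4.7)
The plan is to translate each statement of the proposition into an orbit--containment using three ingredients: Postulate 1 for head--to--head outcomes, Lemma \ref{uxy} for general neutral linear CWF outcomes, and Proposition 1 above for positional outcomes. The engine in all three cases is the $S_{\mathbf{C}}$--equivariance of the families $\{\rxy\}$, $\{\vxy\}$, $\{\uxy\}$, together with transitivity of $S^X_{\mathbf{C}}$ on $\mathbf{C} \setminus \{X\}$. From $\tau.b = b \circ \tau$ and the induced $\tau.\delta_b = \delta_{\tau.b}$, a short computation gives $\tau.\rxy = \mathbf{r}_{\tau^{-1}(X) > \tau^{-1}(Y)}$, and analogously $\tau.\vxy = \mathbf{v}_{\tau^{-1}(X) > \tau^{-1}(Y)}$ and $\tau.\uxy = \mathbf{u}_{\tau^{-1}(X) > \tau^{-1}(Y)}$ (using the $S_{\mathbf{C}}$--equivariance of $F$ in the last case). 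Consequently, as $\tau$ ranges over $S^X_{\mathbf{C}}$, the orbit $S^X_{\mathbf{C}}.\rxy$ coincides with $\{\mathbf{r}_{X>Z} : Z \neq X\}$, and similarly for $\vxy$ and $\uxy$.

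For the first equivalence I would fix any $Y \neq X$. Postulate 1 says $X$ is Condorcet in $\bp$ iff $\bp \cdot \mathbf{r}_{X>Z} > 0$ for every $Z \neq X$. Because the $S_{\mathbf{C}}$--action on $\prof$ is by permutation matrices, hence orthogonal, $\bp \cdot \tau^{-1}.\rxy = \tau.\bp \cdot \rxy$; combined with the orbit description above, this rewrites the Condorcet condition as $\tau.\bp \in (\rxy)_+$ for every $\tau \in S^X_{\mathbf{C}}$, i.e., $S^X_{\mathbf{C}}.\bp \subseteq (\rxy)_+$. For the second equivalence, $F$ satisfies the Condorcet criterion iff whenever $X$ is Condorcet in $\bp$, $X$ is the unique winner of $F(\bp)$. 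Unique victory of $X$ means $F(\bp)(X) > F(\bp)(Z)$ for all $Z \neq X$, which by Lemma \ref{uxy} is $F(\bp) \cdot \mathbf{u}_{X>Z} > 0$ for all $Z \neq X$; the identical orbit--plus--orthogonality argument, now applied to $F(\bp)$ and $\uxy$ and using the equivariance $\tau.F(\bp) = F(\tau.\bp)$ to move $\tau$ inside $F$, rewrites this as $F(S^X_{\mathbf{C}}.\bp) \subseteq (\uxy)_+ = (F(\rxy))_+$. Combining the two translations gives the stated biconditional.

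When $F$ is realized as a positional method $\bw$, Proposition 1 above supplies the cleaner identity $F(\bp)(X) > F(\bp)(Z) \Leftrightarrow \bp \in (\mathbf{v}_{X>Z})_+$, bypassing $\uxy$ altogether; running the same orbit argument with $\vxy$ in place of $\rxy$ rewrites the unique--winner condition as $S^X_{\mathbf{C}}.\bp \subseteq (\vxy)_+$, yielding the positional specialization. The entire argument is essentially bookkeeping once the equivariance identity is in place, and no step presents a genuine conceptual obstacle; the only care required is tracking inverses in the action and the resulting bijection $Z \leftrightarrow \tau^{-1}(Y)$ between opponents of $X$ and $S^X_{\mathbf{C}}$--orbit representatives of $\rxy$.
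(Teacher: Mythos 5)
Your argument is correct and follows essentially the same route as the paper: Postulate~1 plus orthogonality of the permutation action to translate the Condorcet condition into $S^X_{\mathbf{C}}.\bp \subseteq (\rxy)_+$, then Lemma~\ref{uxy} and the $S_{\mathbf{C}}$--equivariance of $F$ for the unique--winner condition, and Proposition~1 for the positional specialization. The only (cosmetic, and if anything slightly more careful) difference is that you run the orbit identification with arbitrary $\tau \in S^X_{\mathbf{C}}$ and transitivity on $\mathbf{C}\setminus\{X\}$, where the paper works with the transpositions $(Y\;Z)$.
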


\section{Equivalent positional voting methods}
 In this section we describe classes of all CWFs that are `essentially the same'.  Although some of these results were previously given in \cite{orrison}, we offer an alternate proof of this classification. We present two equivalences on the collection of CWFs: order--equivalence and cardinal--equivalence. 

We say that two CWFs $F, G$ are \textit{order--equivalent}, written $F~\sim_o~G$, if, for any profile $\mathbf{p}$ and any candidates $X, Y$, $F(\mathbf{p})(X) > F(\mathbf{p})(Y)$ if and only if $G(\mathbf{p})(X) > G(\mathbf{p})(Y)$.  This means that for any profile, $F$ and $G$ will always return the same ordinal ranking of candidates (even if the numerical scores may differ). We extend this equivalence relation to $ \mathbb{R}^{\left[ m \right]}$: we say two vectors $\mathbf{u}$, $\bfw$ are order-equivalent, written $\mathbf{u} \sim_o \bfw$, if $B_{\mathbf{u}} \sim_o B_{\mathbf{w}}$.  It is easily verified that $\sim_o$ is an equivalence relation.   

We next define cardinal--equivalence.  The rationale behind this equivalence is the following: suppose $\mathbf{C} = \left\{ X, Y, Z\right\}$, and CWFs $F$ and $G$ satisfy, for some profile $\bp$, 
\[  F(\bp) =  (4, 1, 2), \; \; \; \; \; \; G(\bp) = (8, 2, 4).\]
These two outcomes are `essentially the same' for the three candidates, in the sense that the \textit{ratios} of points awarded are the same in each case ($X$ earns four times as many points as $Y$, and twice as many points as $Z$).  In this sense we should not distinguish between $F$ and $\alpha F$, if $\alpha$ is any positive scalar.  Consider now the outcomes 
\[ F(\bp) = (4, 1, 2), \; \; \; \; G'(\bp) = (10, 4, 6) = G(\bp) + (2, 2, 2). \]
In this case $F$ and $G'$ are proportional \textit{up to addition of a scalar multiple of $\mathbf{1}$} (in this case $(2,2,2)$).  As addition of any such vector should not affect the outcome of an election, we should not distinguish $G$ from $G'$.  So we say that $F$ and $G$ are \textit{cardinal--equivalent}, written $F \sim_c G$, if there exist $\alpha \in \mathbb{R}_{> 0}, r \in \mathbb{R}$ such that, for any profile $\bp$, $F(\bp) = \alpha( G(\bp) + r\mathbf{1})$.  
 We similarly extend the notion of cardinal--equivalence to $\mathbb{R}^{\left[ m \right]}$, and we denote by $\left[ \bfw \right]_c$ the equivalence class of $\bfw$ with respect to $\sim_c$.    Clearly cardinal--equivalence implies order--equivalence.

	\begin{lemma}\label{kernelT}
		Let $X$ and $Y$ be distinct candidates.  Let $T_{X>Y}: \mathbb{R}^{\left[ m \right]} \rightarrow   \prof$ be the linear map given by $T_{X>Y}: \bfw \mapsto  \bv_{\scriptscriptstyle{\mathbf{w}, X>Y}}$.  Then the kernel of $T_{X>Y}$ is the 1--dimensional subspace spanned by  $\mathbf{1} \in $ $ \mathbb{R}^{\left[ m \right]}$. 
	\end{lemma}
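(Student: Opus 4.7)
The plan is to first verify that $\mathbb{R}\mathbf{1}$ lies in $\ker T_{X>Y}$, and then to show the reverse containment by exploiting the freedom to construct ballots that realize any prescribed pair of levels for $X$ and $Y$.

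For the containment $\mathbb{R}\mathbf{1} \subseteq \ker T_{X>Y}$: if $\bfw = c\mathbf{1}$ is a constant vector in $\mathbb{R}^{[m]}$, then for every ballot $b$ we have $\vx(b) = (c\mathbf{1})(b(X)) = c = (c\mathbf{1})(b(Y)) = \vect{Y}(b)$. Hence $\vxy = \vx - \vect{Y} = 0$, so $c\mathbf{1} \in \ker T_{X>Y}$.

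For the reverse containment, suppose $\bfw \in \ker T_{X>Y}$, so that $\vxy(b) = \bfw(b(X)) - \bfw(b(Y)) = 0$ for every $b \in \mathbf{C}_\lambda$. I would then argue that for any pair of distinct indices $i, j \in [m]$, one can produce a ballot $b \in \mathbf{C}_\lambda$ with $b(X) = i$ and $b(Y) = j$. Since $\lambda_i, \lambda_j > 0$, level $i$ has an empty slot available for $X$ and level $j$ has an empty slot available for $Y$; the remaining $n-2$ candidates can then be placed arbitrarily into the $n-2$ leftover slots to complete a valid tabloid of shape $\lambda$. Applying the kernel hypothesis to this ballot gives $\bfw(i) = \bfw(j)$. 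Since $i$ and $j$ were arbitrary, $\bfw$ is constant, i.e.\ $\bfw \in \mathbb{R}\mathbf{1}$.

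The trivial edge case $m = 1$ (where $\mathbf{C}_\lambda$ is a singleton and $T_{X>Y}$ is identically zero) can be handled separately: here $\mathbb{R}^{[m]} = \mathbb{R}\mathbf{1}$ and $\ker T_{X>Y}$ coincides with the whole space, which is still the one-dimensional subspace spanned by $\mathbf{1}$. There is no serious obstacle in this proof; the only point requiring a moment's care is the combinatorial observation that every prescribed pair of positions $(i,j)$ for $X$ and $Y$ is realized by some tabloid in $\mathbf{C}_\lambda$, and this follows immediately from the hypothesis that each $\lambda_i$ is positive.
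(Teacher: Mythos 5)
Your proposal is correct and follows essentially the same route as the paper's own proof: verify $\mathbf{1}\in\ker T_{X>Y}$ directly, then use the identity $T_{X>Y}(\bfw)(b) = \bfw(b(X)) - \bfw(b(Y))$ together with a ballot placing $X$ at level $i$ and $Y$ at level $j$ to conclude $\bfw(i)=\bfw(j)$ for all $i,j$. The only difference is that you explicitly justify the nonemptiness of the set of such ballots (and the $m=1$ edge case), which the paper leaves implicit.
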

	
	\begin{proof}
	   An easy calculation shows $\mathbf{1} \in \ker(T_{X>Y})$.  For the opposite containment, recall that we may write $\vect{X}$ as \[\vect{X} = \sum_{b\in\mathbf{C}_{\lambda}} \bfw(\mathbf{b}(X))\delta_{\mathbf{b}}.\]  Then we have \[T_{X>Y}(\bfw) = \sum_{b\in\mathbf{C}_{\lambda}} (\bfw(\mathbf{b}(X))-\bfw(\mathbf{b}(Y)))\delta_{\mathbf{b}}.\] For any $\mathbf{b}\in\mathbf{C}_{\lambda}$, this gives $T_{X>Y}(\bfw)(\mathbf{b}) = \bfw(\mathbf{b}(X)) - \bfw(\mathbf{b}(Y))$. 
	   Suppose that $\bfw \in\ker(T_{X>Y})$, let $i, j \in \left[  m \right]$, and let $\mathbf{b} \in \clxy{i}{j}$. Then $ 0 = T_{X>Y}(\bfw)(\mathbf{b}) = \bfw(i) - \bfw(j)$; hence, $\bfw(i) = \bfw(j)$.
	
	\end{proof}

The first part of the next proposition shows that, up to order--equivalence, we can always assume that the weight vector $\bfw$ is orthogonal to $\mathbf{1}$ in $\mathbb{R}^{\left[ m \right]}$. The second part describes the order--equivalence class of $\bfw \in \mathbb{R}^{\left[ m \right]}$. 
\begin{prop}\label{orderequiv} 
Let $\bfw \in \mathbb{R}^{\left[ m \right]}$. 
\begin{enumerate} 
\item The order--equivalence class of $\bfw$ always contains an element $\bfw' \in \mathbf{1}^\perp$. 
\item Let $\bfw \in \mathbf{1}^\perp$.  The order--equivalence class of $\bfw$ is the positive half--plane $(\bfw)_+$ of $\displaystyle{\mathbb{R}\mathbf{1} \oplus \mathbb{R}\bfw }$.
\end{enumerate}
\end{prop}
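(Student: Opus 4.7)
The plan is to establish (1) by exhibiting a concrete projection of $\bfw$ onto $\mathbf{1}^\perp$, and (2) by verifying the easy containment through a direct computation and the reverse containment via a pairwise-parallel argument combined with Lemma \ref{kernelT}. The key fact behind both parts is that the assignment $\bfw \mapsto \bv_{\bfw, X>Y} = T_{X>Y}(\bfw)$ is linear in $\bfw$ with kernel $\mathbb{R}\mathbf{1}$.

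For (1), I will set $c = (\bfw \cdot \mathbf{1})/(\mathbf{1} \cdot \mathbf{1})$ and let $\bfw' = \bfw - c\mathbf{1}$, which lies in $\mathbf{1}^\perp$ by construction. Since $T_{X>Y}$ is linear with $\mathbf{1} \in \ker T_{X>Y}$ (Lemma \ref{kernelT}), one has $\bv_{\bfw', X>Y} = \bv_{\bfw, X>Y}$ for every pair $X, Y$, so the positive half-spaces $(\bv_{\bfw', X>Y})_+$ and $(\bv_{\bfw, X>Y})_+$ coincide and $\bfw' \sim_o \bfw$.

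For (2), the easy direction is to show that every element of $(\bfw)_+$ is order-equivalent to $\bfw$. An element of $(\bfw)_+ \subseteq \mathbb{R}\mathbf{1} \oplus \mathbb{R}\bfw$ has the form $\bfw' = a\mathbf{1} + b\bfw$ with $b > 0$ (using $\bfw \perp \mathbf{1}$), and linearity of $T_{X>Y}$ together with Lemma \ref{kernelT} immediately yields $\bv_{\bfw', X>Y} = b\, \bv_{\bfw, X>Y}$, so the positive half-spaces agree.

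For the reverse direction---the main obstacle---I assume $\bfw' \sim_o \bfw$ and must recover $\bfw' \in (\bfw)_+$. Pairwise order equivalence gives $(\bv_{\bfw', X>Y})_+ = (\bv_{\bfw, X>Y})_+$ for every ordered pair of distinct candidates. Since $\bfw$ is nonzero in $\mathbf{1}^\perp$, Lemma \ref{kernelT} ensures $\bv_{\bfw, X>Y} \neq 0$, and the sharpened form of Proposition \ref{halfspaces} (the remark after its proof) supplies a unique positive scalar $c_{XY}$ with $\bv_{\bfw', X>Y} = c_{XY}\, \bv_{\bfw, X>Y}$. The crux is showing $c_{XY}$ does not depend on the pair; for this I will invoke the $S_{\mathbf{C}}$-action: given any other pair $(X', Y')$, choose $\tau \in S_{\mathbf{C}}$ with $\tau(X') = X$ and $\tau(Y') = Y$, apply $\tau$ to both sides of the defining identity for $c_{XY}$, and use the equivariance $\tau.\bv_{\bfw, Z} = \bv_{\bfw, \tau^{-1}(Z)}$ to obtain $\bv_{\bfw', X'>Y'} = c_{XY}\, \bv_{\bfw, X'>Y'}$, whence $c_{X'Y'} = c_{XY}$. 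Writing $c$ for the common value, one has $T_{X>Y}(\bfw' - c\bfw) = 0$ for every pair, so Lemma \ref{kernelT} forces $\bfw' - c\bfw \in \mathbb{R}\mathbf{1}$; therefore $\bfw' = c\bfw + a\mathbf{1}$ for some $a \in \mathbb{R}$ with $c > 0$, placing $\bfw'$ in the positive half-plane $(\bfw)_+$ of $\mathbb{R}\mathbf{1} \oplus \mathbb{R}\bfw$, as required.
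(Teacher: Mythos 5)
Your argument is correct and follows essentially the same route as the paper: both reduce order--equivalence to the condition $\bv_{\scriptscriptstyle{\mathbf{w}', X>Y}} = c\,\bv_{\scriptscriptstyle{\mathbf{w}, X>Y}}$ with $c>0$ (via Proposition \ref{halfspaces} and its remark) and then apply Lemma \ref{kernelT} to conclude $\bfw' - c\bfw \in \mathbb{R}\mathbf{1}$. The only difference is that your $S_{\mathbf{C}}$--equivariance step showing $c_{XY}$ is independent of the pair is superfluous: since $\ker T_{X>Y} = \mathbb{R}\mathbf{1}$ already for a \emph{single} pair, the relation for one pair forces $\bfw' = c\bfw + a\mathbf{1}$, after which the same $c$ works for every other pair automatically.
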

\begin{proof}  $B^\lambda_{\mathbf{u}} \sim_o B^\lambda_{\mathbf{w}}$ if and only if, for any profile $\mathbf{p}$ and any candidates $X, Y$, 
	 \[ \mathbf{p}\cdot \mathbf{v}_{\scriptscriptstyle{\mathbf{u}, X>Y}} > 0  \; \; \; \; \; \Leftrightarrow \; \; \; \; \; \mathbf{p}\cdot \mathbf{v}_{\scriptscriptstyle{\mathbf{w}, X>Y}} > 0 . \]   This is true if and only if $ \mathbf{v}_{\scriptscriptstyle{\mathbf{u}, X>Y}}= \alpha  \mathbf{v}_{\scriptscriptstyle{\mathbf{w}, X>Y}}$ for some $\alpha > 0$.  This, in turn, is true if and only if $\mathbf{u} - \alpha \bfw$ is in the kernel of $T_{X>Y}$; i.e., $\mathbf{u} - \alpha \bfw \in \mathbb{R}\mathbf{1}$ (by Lemma \ref{kernelT}).  Therefore the positional voting methods $B^\lambda_{\mathbf{u}}$	and $B^\lambda_{\mathbf{w}}$ are order--equivalent if and only if $\bu \in \mathbb{R}_{>0}\bfw \oplus \mathbb{R} \mathbf{1}$.  \end{proof}
	 
	 The weight vectors $\bfw$ and $-\bfw$ are certainly not order--equivalent: clearly $-\bfw$ fully \textit{reverses} the order corresponding to $\bfw$.  For this reason, we will refer to $\bw$ and $-\bw = B^\lambda_{-\mathbf{w}}$ as an \textit{antipodal pair}.  The Proposition states that the plane $\displaystyle{\mathbb{R}\mathbf{1} \oplus \mathbb{R}\bfw }$ contains the order--equivalence classes for $\bfw$ and $-\bfw$  (and all such planes contain the order--equivalence class for the trivial voting method).  
	\begin{center}
\includegraphics[scale=0.7]{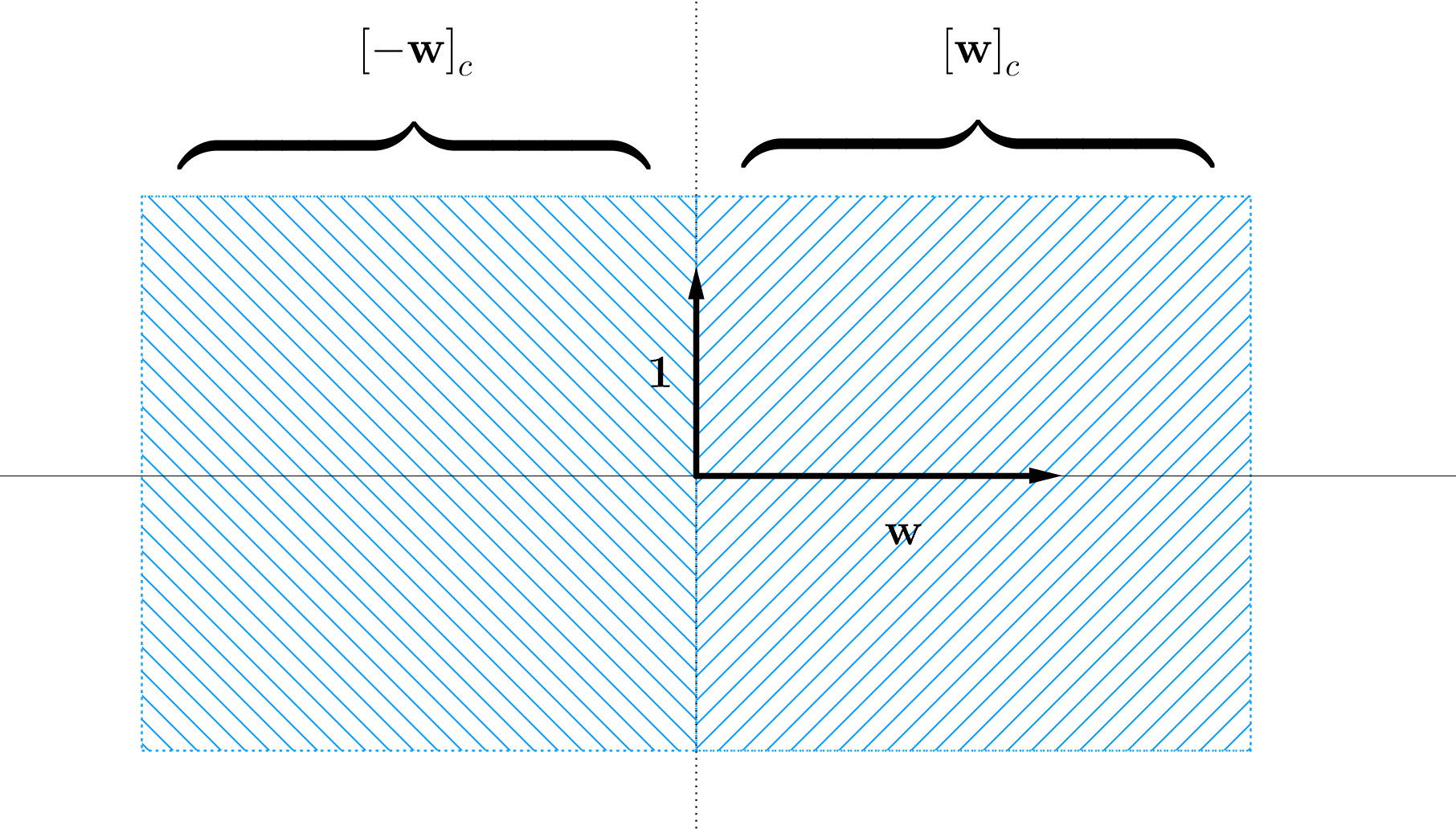}
\end{center}
	 \begin{cor} \mbox{}
	 \begin{enumerate}
	 	\item 	 Up to order--equivalence, the collection of all antipodal pairs $\pm F$ of linear CWFs on $\prof$ is parametrized by the projective space $\mathbb{R}\mathbb{P}^{m- 2}$.
	   \item Two linear CWFs are order--equivalent if and only if they are cardinal--equivalent. 
	    \end{enumerate} 
	 	 \end{cor}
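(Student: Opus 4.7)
By Theorem~\ref{linearscf}, every neutral linear CWF on $\prof$ is a positional voting method $\bw$ for some weight $\bfw \in \mathbb{R}^{\left[ m \right]}$, and the assignment $\bfw \mapsto \bw$ is linear. The proof of Proposition~\ref{orderequiv} already established that $B^\lambda_\bu \sim_o \bw$ if and only if $\bu \in \mathbb{R}_{>0}\bfw + \mathbb{R}\mathbf{1}$. I would therefore pass to the quotient vector space $V := \mathbb{R}^{\left[ m \right]}/\mathbb{R}\mathbf{1}$, which has dimension $m-1$. Nontrivial order--equivalence classes of linear CWFs correspond to $\mathbb{R}_{>0}$--orbits in $V\setminus\{0\}$ (open rays). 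Identifying $F$ with $-F$ in an antipodal pair enlarges the acting group from $\mathbb{R}_{>0}$ to $\mathbb{R}^{\times}$, so antipodal pairs modulo $\sim_o$ are in bijection with lines through the origin in $V$, i.e., with the projective space $\mathbb{P}(V) \cong \mathbb{R}\mathbb{P}^{m-2}$.

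\textbf{Plan for part (2).} Cardinal--equivalence plainly implies order--equivalence: both positive rescaling and addition of a multiple of $\mathbf{1}\in\mathbb{R}^{\mathbf{C}}$ preserve every strict inequality between coordinates. For the converse, let $F = B^\lambda_\bu$ and $G = \bw$ be order--equivalent; by Proposition~\ref{orderequiv} we have $\bu = \alpha\bfw + r\mathbf{1}$ for some $\alpha > 0$ and $r \in \mathbb{R}$. I would then exploit the linearity of $\mathbf{v}_{\bfw, X}$ in $\bfw$ to write $\mathbf{v}_{\bu, X} = \alpha\,\mathbf{v}_{\bfw, X} + r\,\mathbf{v}_{\mathbf{1}, X}$, and observe that $\mathbf{v}_{\mathbf{1}, X}$ is the constant--$1$ function on $\mathbf{C}_\lambda$, independent of the candidate $X$. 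Taking inner products with an arbitrary $\bp \in \prof$ then produces $F(\bp) = \alpha G(\bp) + s_{\bp}\,\mathbf{1}$ in $\mathbb{R}^{\mathbf{C}}$, with $s_{\bp} := r\sum_{b\in\mathbf{C}_\lambda}\bp(b)$, which is precisely the cardinal--equivalence relation applied to the profile $\bp$.

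\textbf{Main obstacle.} The only delicate point is that the shift $s_{\bp}$ produced above depends on the profile through its signed vote total; to reconcile this with the definition of $\sim_c$ as written, the scalar $r$ appearing in $F(\bp) = \alpha(G(\bp) + r\mathbf{1})$ must be read as a profile--dependent constant, as is consistent with the motivating example (stated for a single profile). Alternatively, restricting attention to normalized, unit--height representatives makes $s_{\bp} = r$ a genuine constant and removes any ambiguity. Under either reading the computation closes the argument, and one need only be careful to keep straight the two roles of $\mathbf{1}$: the unit function in $\mathbb{R}^{\left[ m \right]}$ (used for weight vectors) versus the all--ones vector in $\mathbb{R}^{\mathbf{C}}$ (used for outcome shifts).
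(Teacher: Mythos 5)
Your argument is correct and follows essentially the same route as the paper: part (1) identifies antipodal pairs with lines in $\mathbb{R}^{\left[ m \right]}/\mathbb{R}\mathbf{1}$ (the paper phrases this as the family of planes containing $\mathbb{R}\mathbf{1}$, i.e.\ the Grassmannian $G(1,m-1)$), and part (2) is the same decomposition $B^\lambda_{\bu} = \alpha \bw + \beta B^\lambda_{\mathbf{1}}$ obtained from Proposition~\ref{orderequiv} together with linearity in the weight vector. The subtlety you flag is genuine but is shared by the paper's own proof, whose shift term $\beta B^\lambda_{\mathbf{1}}(\bp) = \beta\left(\sum_{b}\bp(b)\right)\mathbf{1}$ likewise depends on $\bp$ through its signed vote total, so the definition of $\sim_c$ must indeed be read with a profile--dependent (or height--normalized) constant exactly as you propose.
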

	 \begin{proof}
	 	The collection of order--equivalence classes in $\mathbb{R}^{\left[ m \right]}$ for $\pm \bw$ consists of the family of all planes which contain (hence intersect at) the subspace $\mathbb{R}\mathbf{1}$.  We note that this is also the Grassmannian $ G(1, m - 1)$.
	 	
	 	Let $F = B^\lambda_{\mathbf{u}}$ and $G = \bw$.  If $B^\lambda_{\mathbf{u}} \sim_c \bw$, then clearly $B_{\mathbf{u}} \sim_o \bw$.  Conversely, suppose $B_{\mathbf{u}} \sim_o \bw$.  Then by the Proposition \ref{orderequiv} we have $\bfw = \alpha \bu + \beta \mathbf{1}$.  By linearity of $\bw$ (in $\bfw$) we have $\bw = B_{ \alpha \bu + \beta \mathbf{1}} = \alpha B^\lambda_{\mathbf{u}} + \beta B^\lambda_{\mathbf{1}}$; hence, $ \bw \sim_c  B^\lambda_{\mathbf{u}}$. 
	 	\end{proof}
The diagram below illustrates the unit vector $\mathbf{1}$, along with the half-planes of equivalence classes corresponding to a collection of nonequivalent weight vectors (these weight vectors are not shown in the image).  Each half-plane, arranged as a page along the spine of $\mathbb{R}\mathbf{1}$, corresponds to exactly one equivalence class of CWF's.  
\begin{center}
\includegraphics[scale=0.4]{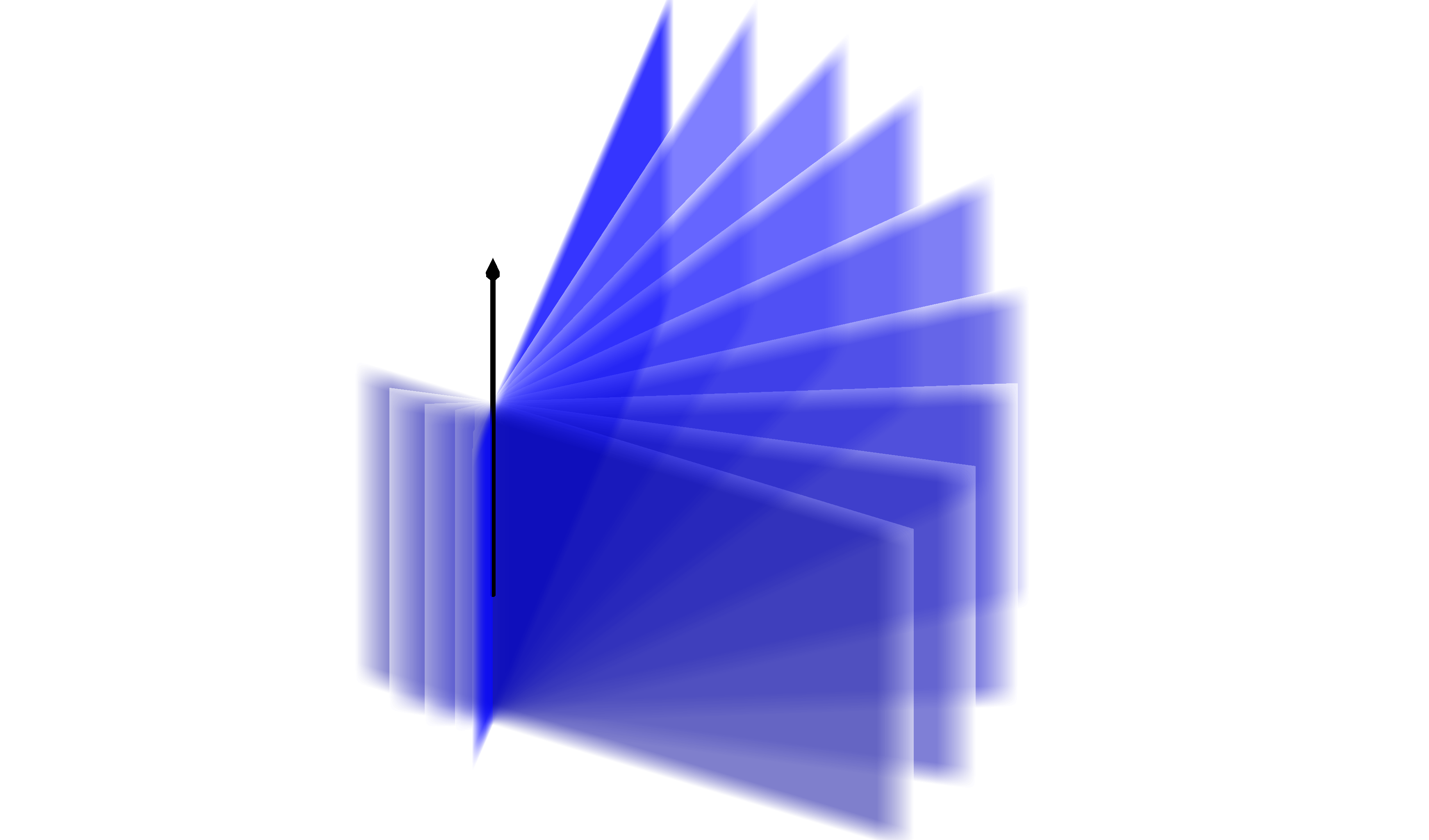}
\end{center}
\mbox{}
\\
\\
\\
 As mentioned above, the result in Proposition \ref{orderequiv} (2) was given in \cite{orrison}.  However, the equivalence of order-- and cardinal--equivalence was not mentioned there.  The identification of antipodal pairs of CWFs with a projective space is an easy corollary, but it seems worth mentioning; on a speculative note it may be interesting to use this identification to provide the collection of all CWFs with this natural topology for future applications.

\end{document}